\documentclass[11pt]{article}
\setlength{\textwidth}{16cm}
\setlength{\textheight}{20cm}
\setlength{\oddsidemargin}{+0.2cm}
\setlength{\topmargin}{0cm}
\usepackage{amsmath}
\usepackage{amssymb}
\usepackage{amsthm}
\usepackage{cases}
\usepackage{url}

\newtheorem{theorem}{Theorem}[section]

\newtheorem{corollary}[theorem]{Corollary}
\newtheorem{proposition}[theorem]{Proposition}

\newcommand\sa{\smallskipamount}
\newcommand\sPP{\\[\sa]\indent}
\newcommand\al\alpha
\newcommand\be\beta
\newcommand\de\delta
\newcommand\tha\theta
\newcommand\la\lambda
\newcommand\La\Lambda
\newcommand\ga{\gamma}
\newcommand\Ga{\Gamma}
\begin{document}
\title{The higher-order differential operator for the generalized Jacobi polynomials - new representation and symmetry}
\author{Clemens Markett}
\date{}
\maketitle

\numberwithin{equation}{section}
\numberwithin{theorem}{section}
\begin{abstract}
For a long time it has been a challenging goal to identify all orthogonal polynomial systems that occur as eigenfunctions of a linear differential equation. One of the widest classes of such eigenfunctions known so far, is given by Koornwinder's generalized Jacobi polynomials with four parameters $\al,\be\in\mathbb{N}_{0}$ and $M,N \ge 0$ determining the orthogonality measure on the interval $-1 \le x \le 1$. The corresponding differential equation of order $2\al+2\be+6$ is presented here as a linear combination of four elementary components which make the corresponding differential operator widely accessible for applications. In particular, we show that this operator is symmetric with respect to the underlying scalar product and thus verify the orthogonality of the eigenfunctions.\\ 
\\
Key words: orthogonal polynomials, higher-order differential operator, generalized Jacobi equation, generalized Jacobi polynomials.\\
\\
2010 Mathematics Subject Classification: 33C47, 34B30, 34L10
\end{abstract}
\section{Introduction}
\label{intro}
In 1999, J. and R. Koekoek \cite{KK2} found a new class of higher-order linear differential equations satisfied by the generalized Jacobi polynomials $\{P_{n}^{\al,\be,M,N}(x)\} _{n=0}^{\infty},\;\al,\be >-1,\;M,N \ge 0$. These function systems were introduced and investigated by T. H. Koornwinder \cite{Ko}
as the orthogonal polynomials with respect to a linear combination of the Jacobi weight function $w_{\al,\be}$ and, in general, two delta “functions” at the endpoints of the interval $-1 \le x \le 1$,
\begin{equation}
\begin{aligned}
&w_{\al,\be,M,N}(x)=w_{\al,\be}(x)+M\de(x+1)+N\de(x-1),\\ 
&w_{\al,\be}(x)=h_{\al,\be}^{-1}(1-x)^{\al}(1+x)^{\be},\\
&h_{\al,\be}=\int_{-1}^{1}(1-x)^{\al}(1+x)^{\be}dx=
2^{\al+\be+1}\Gamma(\al+1)\Ga(\be+1)/\Ga(\al+\be+2).
 \label{eq1.1}
\end{aligned}
\end{equation}

In terms of the classical Jacobi polynomials (cf. \cite [Sec. 10.8]{HTF2})
\begin{equation}
P_n^{\al,\be}(x)=\frac{(\al+1)_n}{n!}{}_2F_1\big(-n,n+\al+\be+1;\al+1;\frac{1-x}{2}\big),
n\in\mathbb{N}_{0}=\lbrace 0, 1, \cdots \rbrace,
 \label{eq1.2}
 \end{equation}
Koornwinder's generalized Jacobi polynomials are given by (see \cite{KK2},\cite{Ko},\cite{Ba})
\begin{equation}
P_n^{\al,\be,M,N}(x)=P_n^{\al,\be}(x)+MQ_n^{\al,\be}(x)
+NR_n^{\al,\be}(x)+MNS_n^{\al,\be}(x),\; n\in\mathbb{N}_{0},
 \label{eq1.3}
\end{equation}   
where $Q_0^{\al,\be}(x)=R_0^{\al,\be}(x)=
S_0^{\al,\be}(x)=S_1^{\al,\be}(x)=0$ and
\begin{equation}
 Q_n^{\al,\be}(x)=q_n^{\al,\be}(x+1)P_{n-1}^{\al,\be+2}(x),\; 
 q_n^{\al,\be}=\frac{(\al+\be+2)_n(\be+2)_{n-1}}{2n!\,(\al+1)_{n-1}}, n \ge 1,
 \label{eq1.4}
 \end{equation}
\begin{equation}
R_n^{\al,\be}(x)=r_n^{\al,\be}(x-1)P_{n-1}^{\al+2,\be}(x),\; r_n^{\al,\be}=\frac{(\al+\be+2)_n(\al+2)_{n-1}}{2n!\,(\be+1)_{n-1}}, n \ge 1,
 \label{eq1.5}
 \end{equation}	  
\begin{equation}
S_n^{\al,\be}(x)=s_n^{\al,\be}(x^2-1)P_{n-2}^{\al+2,\be+2}(x),\; s_n^{\al,\be}=\frac{(\al+\be+2)_n(\al+\be+2)_{n+1}}{(\al+1)(\be+1)4(n-1)!\,n!},
 n \ge 2.
 \label{eq1.6}
 \end{equation}	
Because of the well-known relationship $P_n^{\al,\be}(x)=(-1)^{n}P_n^{\be,\al}(-x)$, one also has
\begin{equation}
 Q_n^{\al,\be}(x)=(-1)^{n}R_n^{\be,\al}(-x),
 S_n^{\al,\be}(x)=(-1)^{n}S_n^{\be,\al}(-x)
  \label{eq1.7}
 \end{equation}	 
 and hence $P_n^{\al,\be,M,N}(x)=
 (-1)^{n}P_n^{\be,\al,N,M}(-x),\;n\in\mathbb{N}_{0}$.
 
Shortly after Koekoek's discovery, the generalized Jacobi equation was interpreted by Bavinck \cite{Ba} as an eigenvalue equation with the generalized Jacobi polynomials as its eigenfunctions. According to the four terms of definition (1.3), Bavinck looked for a representation in the modular form 
\begin{equation}
\begin{aligned}
&[L_x^{\al,\be}+MA_x^{\al,\be}
+NB_x^{\al,\be}+MNC_x^{\al,\be}]P_n^{\al,\be,M,N}(x)\\
&=[\la_n+M\al_n+N\be_n+MN\ga_n]P_n^{\al,\be,M,N}(x),\; n\in\mathbb{N}_{0}.
\end{aligned}
 \label{eq1.8}
\end{equation}
Notice that for $M=N=0$, equation (1.8) comprises the classical second-order differential equation for the Jacobi polynomials,
$L_x^{\al,\be}P_n^{\al,\be}(x)=\la_nP_n^{\al,\be}(x),\,n\in\mathbb{N}_{0}$, where 
 \begin{equation}
   L_{x}^{\al,\be}=(x^2-1)D_x^2+\left[\al-\be+(\al+\be+2)x\right]D_x,\; \la_n=n(n+\al+\be+1).
 \label{eq1.9}
 \end{equation}
 Throughout this paper, $D_x^i\equiv (D_x)^i$, $i=1,2,\cdots$, denotes an $i$-fold differentiation with respect to $x$. The other three differential expressions on the left-hand side of (1.8), $A_x^{\al,\be},B_x^{\al,\be},C_x^{\al,\be}$, were explicitly given by J. and R. Koekoek \cite{KK2} as polynomials in $D_x$ with coefficient functions involving certain generalized hypergeometric sums. In addition, Bavinck \cite{Ba} used some operator theoretical arguments to present them in “factorized” form, i. e. as a product of second-order differential operators, see (2.20) below. In particular, it turned out that
 \begin{equation}
 \begin{aligned}
 &A_x^{\al,\be}\text{ is of order }2\be+4,\text{ if }\be \in\mathbb{N}_{0},\al >-1,\\
 &B_x^{\al,\be}\text{ is of order }2\al+4,\text{ if }\al \in\mathbb{N}_{0},\be >-1,\\
 &C_x^{\al,\be}\text{ is of order }2\al+2\be+6,\text{ if }\al,\be \in\mathbb{N}_{0}.
 \end{aligned}
  \label{eq1.10}
 \end{equation}
 
 The main purpose of this paper is to establish another, completely elementary representation of the higher-order differential equation (1.8). This is carried out in Section 2 together with some of its features. The crucial part of the proof is to handle the operator $C_x^{\al,\be}$  and its interplay with the other three components. Among others, it illustrates why, for different values of the parameters $\al,\be\in\mathbb{N}_{0}, M,N > 0$, the order of the equation is
 $2\al+2\be+6$ and thus less than the sum of the two orders $2\al+4$ and $2\be+4$ in the particular Jacobi-type cases with only one additional point mass in the weight function. New elementary representations of the latter Jacobi-type equations as well as of the symmetric ultrapherical-type equation for $\al =\be \in\mathbb{N}_{0}$, $M=N > 0$, have recently been given by the author \cite{Ma2},\cite{Ma1}. Moreover, our present results are suited to gain new insight into earlier contributions of various authors in the field since the pioneering work of H. L. Krall \cite{Kr2}, see also \cite{Kr1},\cite{Koe}. A prominent example is the generalized Legendre case $\al=\be=0$ with different values of $M,N > 0$. The corresponding sixth-order equation for the so-called Krall polynomials has been discovered by L.L. Littlejohn \cite{Li}. Its new elementary form will be presented in Corollary 2.4 below.
  
 In Section 3 we use the new representation of the generalized Jacobi equation to show that the corresponding differential operator (3.3) is symmetric with respect to the weighted scalar product associated with $w_{\al,\be,M,N}(x)$. Consequently, its eigenfunctions form an orthogonal system in the corresponding function space. Moreover, our results may lay foundation to a deeper spectral theoretical treatment of the higher-order differential equation. 
 
   \section{Elementary representation of the generalized Jacobi equation}
 \label{sec:2}
 
 First of all, we slightly reformulate equation (1.8), which is motivated as follows. While the eigenvalue component $\la_n=n(n+\al+\be+1)$ is a monic polynomial in $n$ of second degree, say $\La_{2,n}^{\al,\be}$ , the other three eigenvalue components $\al _n, \be _n, \ga _n$  were given in \cite[(2.2)-(2.4)]{Ba} as certain algebraic expressions. However, each of them can be normalized to become a monic polynomial in $n$, as well, where its degree coincides with the order of the corresponding differential operator. Adopting the notations used in \cite{Ma2} and indicating the respective order as another index, we write 
  \begin{equation*}
     \al _n=b_{\be,\al}^{-1}\La_{2\be+4,n}^{\be,\al},\;
      \be _n=b_{\al ,\be}^{-1}\La_{2\al +4,n}^{\al,\be},\;
      \ga _n=c_{\al ,\be}^{-1}\La_{2\al +2\be +6,n}^{\al,\be},
      \end{equation*}
 where
  \begin{equation}
   b_{\al,\be}=(\al+2)!(\be+1)_{\al+1},\;
   c_{\al,\be}=(\al+1)(\be+1)(\al+\be+3)(\al+\be+1)!^2,  
    \label{eq2.1}
   \end{equation}
  and
   \begin{equation}
    \La_{2\al+4,n}^{\al,\be}=(n)_{\al+2}(n+\be)_{\al+2},\;
    \La_{2\al+2\be+6,n}^{\al,\be}=(n-1)_{\al+\be+3}(n)_{\al+\be+3}.
     \label{eq2.2}
    \end{equation}
  Similarly, we normalize the differential operators on the left-hand side of (1.8) by setting
  \begin{equation}
  L_{2,x}^{\al,\be} \simeq L_x^{\al,\be},\;
  \widetilde{L}_{2\be+4,x}^{\be,\al} \simeq b_{\be,\al}A_x^{\al,\be},\;  
  L_{2\al+4,x}^{\al,\be}\simeq b_{\al,\be}B_x^{\al,\be},\;\\  
  L_{2\al+2\be+6,x}^{\al,\be}\simeq c_{\al,\be}C_x^{\al,\be}.
       \label{eq2.3}
      \end{equation}
  \begin{theorem}
 \label{thm2.1}
 For $\al,\be\in\mathbb{N}_0,\,M,N>0$, 
 Koorwinder's generalized Jacobi polynomials\\ $y_n(x)=P_{n}^{\al,\be,M,N}(x), n \in \mathbb{N}_0$, are the eigenfunctions of the generalized Jacobi equation
  \begin{equation}
  \begin{aligned}
  &\bigg\lbrace \big\lbrack L_{2,x}^{\al,\be}-\La_{2,n}^{\al,\be}\big\rbrack +  
   \frac{M}{b_{\be,\al}}\big\lbrack\widetilde{L}_{2\be+4,x}^{\be,\al}
       -\La_{2\be+4,n}^{\be,\al}\big\rbrack+\\
   &\frac{N}{b_{\al,\be}}\big\lbrack L_{2\al+4,x}^{\al,\be}
         -\La_{2\al+4,n}^{\al,\be}\big\rbrack +
    \frac{MN}{c_{\al,\be}}\big\lbrack L_{2\al+2\be +6,x}^{\al,\be}
         -\La_{2\al+2\be +6,n}^{\al,\be}\big\rbrack \bigg\rbrace y_n(x)=0.
        \label{eq2.4}
    \end{aligned}
    \end{equation}
 Here, the four eigenvalue components are given in (1.9) and (2.2) and, for any sufficiently smooth functions $y(x)$,
  \begin{equation}
   L_{2,x}^{\al,\be}y(x) =\frac{1}{(x-1)^{\al}(x+1)^{\be}}D_x\big\lbrack (x-1)^{\al +1}(x+1)^{\be +1}D_x y(x)\big\rbrack,
    \label{eq2.5}
  \end{equation}
  \begin{equation}
    \widetilde{L}_{2\be +4,x}^{\be,\al}y(x) =\frac{x+1}{(x-1)^{\al}}D_x^{\be +2}\big\lbrace (x-1)^{\al +\be+2}D_x^{\be +2}\big\lbrack (x+1)^{\be +1} y(x)\big\rbrack \big\rbrace,
      \label{eq2.6}
    \end{equation}
     \begin{equation} 
    L_{2\al +4,x}^{\al ,\be}y(x) =\frac{x-1}{(x+1)^{\be}}D_x^{\al+2}\big\lbrace (x+1)^{\al +\be+2}D_x^{\al+2} \big\lbrack (x-1)^{\al+1} y(x)\big\rbrack \big\rbrace,
         \label{eq2.7}
     \end{equation}     
     \begin{equation}
      L_{2\al+2\be +6,x}^{\al,\be}y(x) =(x^2-1)D_x^{\al+\be+3}\big\lbrace (x-1)^{\be+1}(x+1)^{\al+1}D_x^{\al+\be+3} \big\lbrack (x-1)^{\al+1} (x+1)^{\be+1} y(x)\big\rbrack \big\rbrace.
         \label{eq2.8}
      \end{equation}    
 Expanding these four differential expressions as power series in $D_x^{i}$, the coefficient functions are polynomials of degree $i$ in $x$, where those of highest degree are of a particularly simple form, i.e.,
 \begin{equation}
   \begin{aligned}
   L_{2,x}^{\al,\be}y(x)
    =&\sum_{i=1}^{2}c_i^{\al,\be}(x)D_x^{i}y(x),\;
   c_1^{\al,\be}(x)=\al-\be+(\al+\be+2)x,\;c_2^{\al,\be}(x)=x^2-1\\
  \widetilde{L}_{2\be+4,x}^{\be,\al}y(x)
    =&\sum_{i=1}^{2\be+4}\widetilde{d}_i^{\be,\al}(x)D_x^{i},\;
    \widetilde{d}_{2\be+4}^{\be,\al}(x)=(x^2-1)^{\be+2}\\
  L_{2\al+4,x}^{\al,\be}y(x)
     =&\sum_{i=1}^{2\al+4}d_i^{\al,\be}(x)D_x^{i}y(x),\;
     d_{2\al+4}^{\al,\be}(x)=(x^2-1)^{\al+2}\\   
  L_{2\al+2\be+6,x}^{\al,\be}y(x)
       =&\sum_{i=1}^{2\al+2\be +6}e_i^{\al,\be}(x)D_x^{i}y(x),\;
       e_{2\al+2\be+6}^{\al,\be}(x)=(x^2-1)^{\al+\be+3}.    
           \label{eq2.9}
     \end{aligned}
     \end{equation}
   \end{theorem}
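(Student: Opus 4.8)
The statement contains two separate assertions — the eigenvalue equation \eqref{eq2.4} and the expansion \eqref{eq2.9} — and I would treat the second one first, since it only asks for degree bookkeeping. Each operator in \eqref{eq2.5}--\eqref{eq2.8} has the nested shape $\tfrac1w\,D_x^{k}\big[W\,D_x^{k}(w\,y)\big]$ times an outer polynomial, and the highest-order term in $D_x$ arises, by Leibniz' rule, only when every one of the $2k$ differentiations falls on $y$; the three weight factors then coalesce into a single power of $x^2-1$, yielding at once the stated leading coefficients $(x^2-1)^{\al+2}$, $(x^2-1)^{\be+2}$ and $(x^2-1)^{\al+\be+3}$. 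That the coefficient of $D_x^i$ is a polynomial of degree exactly $i$ I would obtain from the fact that each operator preserves the filtration by degree: for \eqref{eq2.8}, if $\deg y=n$ then $w\,y$, $D_x^{k}(w\,y)$, $W\,D_x^{k}(w\,y)$ and its $k$-th derivative have degrees $n+\al+\be+2$, $n-1$, $n+\al+\be+1$ and $n-2$, so the image has degree $\le n$; the analogous count for \eqref{eq2.6}--\eqref{eq2.7} works once one notes that the inner factor $(x-1)^{\al+\be+2}$ leaves, after $\be+2$ differentiations, a factor $(x-1)^{\al}$ that cancels the outer denominator. An operator $\sum_i e_i(x)D_x^i$ that preserves this filtration satisfies $\deg e_i\le i$, and equality follows by checking that the $x^i$-coefficient does not vanish.

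For \eqref{eq2.4} I would use that both the bracketed operator and $y_n=P_n^{\al,\be,M,N}$ are at most bilinear in $(M,N)$. Inserting the decomposition \eqref{eq1.3} and collecting the coefficients of $M^jN^k$, $0\le j,k\le2$, turns \eqref{eq2.4} into nine polynomial identities among the four operators and the building blocks $P_n^{\al,\be},Q_n^{\al,\be},R_n^{\al,\be},S_n^{\al,\be}$. The reflection symmetry \eqref{eq1.7}, together with the operator symmetries $\widetilde L_{2\be+4,x}^{\be,\al}\leftrightarrow L_{2\al+4,x}^{\al,\be}$ and the invariance of $L_{2,x}^{\al,\be}$ and $L_{2\al+2\be+6,x}^{\al,\be}$ under $x\mapsto-x,\ \al\leftrightarrow\be$, pairs these identities off, leaving six to prove. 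Three of them are diagonal eigenvalue statements: $L_{2,x}^{\al,\be}P_n=\La_{2,n}^{\al,\be}P_n$ (the classical Jacobi equation, read off from the Sturm--Liouville form \eqref{eq2.5}), $\widetilde L_{2\be+4,x}^{\be,\al}Q_n=\La_{2\be+4,n}^{\be,\al}Q_n$, and $L_{2\al+2\be+6,x}^{\al,\be}S_n=\La_{2\al+2\be+6,n}^{\al,\be}S_n$. Each I would verify by telescoping, using the one-step shift relations $D_x[(1+x)^{\be}P_m^{\al,\be}]=(m+\be)(1+x)^{\be-1}P_m^{\al+1,\be-1}$, $D_x[(1-x)^{\al}P_m^{\al,\be}]=-(m+\al)(1-x)^{\al-1}P_m^{\al-1,\be+1}$, the two-sided step $D_x[(1-x)^{\al}(1+x)^{\be}P_m^{\al,\be}]=-2(m+1)(1-x)^{\al-1}(1+x)^{\be-1}P_{m+1}^{\al-1,\be-1}$, and $D_x^{j}P_m^{\al,\be}=2^{-j}(m+\al+\be+1)_jP_{m-j}^{\al+j,\be+j}$. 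Inserting, e.g., $Q_n=q_n^{\al,\be}(x+1)P_{n-1}^{\al,\be+2}$ into \eqref{eq2.6}, the inner $D_x^{\be+2}$ annihilates the weight $(1+x)^{\be+2}$ step by step and the outer $D_x^{\be+2}$ lowers $(x-1)^{\al+\be+2}$ back to $(x-1)^{\al}$, which the prefactor $\tfrac{x+1}{(x-1)^{\al}}$ absorbs, the accumulated constants telescoping exactly into $(n)_{\be+2}(n+\al)_{\be+2}=\La_{2\be+4,n}^{\be,\al}$; the $S_n$-case is the longest such telescope, the inner block there using the index-raising two-sided step and the outer block the pure derivative formula.

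The remaining three identities are the coupling relations, and these I expect to be the main obstacle. They require each operator acting on a \emph{non-matching} building block — most critically $L_{2\al+2\be+6,x}^{\al,\be}$ applied to the plain polynomial $P_n^{\al,\be}$ (as well as to $Q_n^{\al,\be}$), where the nested differentiations no longer collapse to a single eigenfunction. My plan is to expand such images by Leibniz' rule and $D_x^{j}P_m^{\al,\be}=2^{-j}(m+\al+\be+1)_jP_{m-j}^{\al+j,\be+j}$ into finite sums of shifted Jacobi polynomials, to compute the lower-order operators $L_{2,x}^{\al,\be}$, $\widetilde L_{2\be+4,x}^{\be,\al}$, $L_{2\al+4,x}^{\al,\be}$ on the same blocks, and to match the two sides term by term. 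The hardest is the central relation, in which all four operators combine on $P_n^{\al,\be},Q_n^{\al,\be},R_n^{\al,\be},S_n^{\al,\be}$ simultaneously; this is precisely the ``interplay'' of $L_{2\al+2\be+6,x}^{\al,\be}$ with the other components flagged in the introduction. The order count $2\al+2\be+6=(2\be+4)+(2\al+4)-2$ signals that the two middle derivative-and-weight blocks of $\widetilde L_{2\be+4,x}^{\be,\al}$ and $L_{2\al+4,x}^{\al,\be}$ share a common second-order core, and making this overlap explicit is what should drive the required cancellations.
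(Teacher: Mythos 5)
Your treatment of the expansion \eqref{eq2.9} and of the four ``diagonal'' eigenvalue relations is sound and essentially reproduces the paper's Proposition 2.2: the telescoping via the shift formulas \eqref{eq2.14}--\eqref{eq2.17} is exactly how the paper establishes $L_{2,x}^{\al,\be}P_n=\La_{2,n}^{\al,\be}P_n$, $\widetilde L_{2\be+4,x}^{\be,\al}Q_n=\La_{2\be+4,n}^{\be,\al}Q_n$, $L_{2\al+4,x}^{\al,\be}R_n=\La_{2\al+4,n}^{\al,\be}R_n$ and $L_{2\al+2\be+6,x}^{\al,\be}S_n=\La_{2\al+2\be+6,n}^{\al,\be}S_n$. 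Your bookkeeping of the nine $M^jN^k$-coefficients is also correct, and the five cross identities you are left with are precisely \eqref{eq2.26}--\eqref{eq2.30} of Corollary 2.5.

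The gap is that you stop exactly where the real work begins. Your plan for the coupling relations --- expand $L_{2\al+2\be+6,x}^{\al,\be}P_n^{\al,\be}$ etc.\ by Leibniz into shifted Jacobi polynomials and ``match the two sides term by term'' --- is not carried out, and it is not what the paper does; indeed the paper itself only verifies these identities by computer algebra for sample parameters and treats them as a \emph{consequence} of the theorem, not an ingredient of its proof. The paper's actual argument avoids the cross terms entirely: it shows that each new operator coincides with the corresponding normalized Bavinck operator from \eqref{eq2.20} as an operator on \emph{all} polynomials, by combining Propositions 2.2 and 2.3 (agreement on the spanning sets $\{Q_n\}$, $\{R_n\}$, $\{S_n\}$ of the polynomials divisible by $x+1$, $x-1$, $x^2-1$) with the observation that $\widetilde L_{2\be+4,x}^{\be,\al}$ and $L_{2\al+4,x}^{\al,\be}$ annihilate constants and $L_{2\al+2\be+6,x}^{\al,\be}$ annihilates linear functions, exactly as $A_x^{\al,\be}$, $B_x^{\al,\be}$, $C_x^{\al,\be}$ do. Once the operators are identified, equation \eqref{eq2.4} is just the already established equation \eqref{eq1.8} of Koekoek--Koekoek and Bavinck in new clothing, and all cross identities come for free. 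Without either this identification step (which requires the factorized forms \eqref{eq2.20} and the cited literature) or an actual execution of your term-by-term matching --- which for $L_{2\al+2\be+6,x}^{\al,\be}$ acting on $P_n^{\al,\be}$ is a genuinely heavy computation you only describe as ``the main obstacle'' --- your argument does not close.
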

 
  The proof of Theorem 2.1 crucially depends on the following properties of the four components of equation (2.4). 
  \begin{proposition}	
 \label{prop2.2}
  Let $\al,\be\in\mathbb{N}_0$. The four terms (1.2), (1.4)\textendash (1.6) of Koornwinder's generalized Jacobi polynomials $P_{n}^{\al,\be,M,N}(x)$, $n \in \mathbb{N}_0$, are characterized as polynomial solutions of the four eigenvalue equations  
  \begin{equation}
     L_{2,x}^{\al,\be}P_n^{\al,\be}(x)=\La_{2,n}^{\al,\be}P_n^{\al,\be}(x)
      \label{eq2.10}
    \end{equation}
    \begin{equation}
      \widetilde{L}_{2\be+4,x}^{\be,\al}Q_n^{\al,\be}(x) =
      \La_{2\be+4,n}^{\be,\al}Q_n^{\al,\be}(x)
        \label{eq2.11}
      \end{equation}
       \begin{equation} 
      L_{2\al+4,x}^{\al,\be}R_n^{\al,\be}(x)=
       \La_{2\al+4,n}^{\al,\be}R_n^{\al,\be}(x)
           \label{eq2.12}
       \end{equation}     
       \begin{equation}
        L_{2\al+2\be+6,x}^{\al,\be}S_n^{\al,\be}(x) =
         \La_{2\al+2\be+6,n}^{\al,\be}S_n^{\al,\be}(x).
           \label{eq2.13}
        \end{equation}    
  \end{proposition}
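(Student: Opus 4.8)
The plan is to prove the four identities one at a time, first using two symmetries to cut down the work. Equation~(2.10) is nothing but the classical Jacobi equation: expanding the Sturm--Liouville form (2.5) reproduces the operator (1.9) with coefficients $c_1^{\al,\be},c_2^{\al,\be}$ as recorded in (2.9), and $P_n^{\al,\be}$ is its eigenfunction for the eigenvalue $\la_n=n(n+\al+\be+1)=\La_{2,n}^{\al,\be}$. For the remaining three, comparing (2.6) with (2.7) shows that $\widetilde{L}_{2\be+4,x}^{\be,\al}$ and $L_{2\al+4,x}^{\al,\be}$ are interchanged by the reflection $x\mapsto-x$ together with $\al\leftrightarrow\be$; since the same operation sends $R_n^{\al,\be}$ to $Q_n^{\be,\al}$ by (1.7) and $\La_{2\al+4,n}^{\al,\be}$ to $\La_{2\be+4,n}^{\be,\al}$ by (2.2), equation~(2.12) is equivalent to (2.11). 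It therefore suffices to establish (2.11) and (2.13).

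The engine of the proof is a small set of classical identities for the Jacobi polynomials, used in their iterated forms. I will use the two one-sided lowering rules
\[ D_x^{k}\big[(x+1)^{\be}P_m^{\al,\be}(x)\big]=(m+\be-k+1)_k\,(x+1)^{\be-k}P_m^{\al+k,\be-k}(x), \]
\[ D_x^{k}\big[(x-1)^{\al}P_m^{\al,\be}(x)\big]=(m+\al-k+1)_k\,(x-1)^{\al-k}P_m^{\al-k,\be+k}(x), \]
which peel off one boundary factor at a time, and the degree-lowering derivative rule
\[ D_x^{k}P_m^{\ga,\de}(x)=2^{-k}\,(m+\ga+\de+1)_k\,P_{m-k}^{\ga+k,\de+k}(x). \]
Each follows by iterating its one-step version and stays within the polynomial framework.

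For (2.11) I feed $Q_n^{\al,\be}=q_n^{\al,\be}(x+1)P_{n-1}^{\al,\be+2}$ through the operator (2.6) in its five natural stages. Multiplying by $(x+1)^{\be+1}$ gives $q_n^{\al,\be}(x+1)^{\be+2}P_{n-1}^{\al,\be+2}$, and the first rule with $k=\be+2$ collapses the $(x+1)$-power to yield $(n)_{\be+2}\,P_{n-1}^{\al+\be+2,0}$. Multiplying by $(x-1)^{\al+\be+2}$ and applying $D_x^{\be+2}$ through the second rule produces $(n+\al)_{\be+2}\,(x-1)^{\al}P_{n-1}^{\al,\be+2}$, and the closing factor $(x+1)/(x-1)^{\al}$ restores $(x+1)P_{n-1}^{\al,\be+2}$. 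Hence the operator acts as multiplication by $(n)_{\be+2}(n+\al)_{\be+2}=\La_{2\be+4,n}^{\be,\al}$, which is (2.11). The decisive feature is that the intermediate parameters run through the nonnegative chain $(\al,\be+2)\to(\al+\be+2,0)\to(\al,\be+2)$, so no degenerate Jacobi polynomial is ever encountered.

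Equation~(2.13) is the heart of the matter and the step I expect to be the main obstacle. Starting from $S_n^{\al,\be}=s_n^{\al,\be}(x^2-1)P_{n-2}^{\al+2,\be+2}$, the innermost multiplication by $(x-1)^{\al+1}(x+1)^{\be+1}$ assembles the \emph{symmetric} weight $s_n^{\al,\be}(x-1)^{\al+2}(x+1)^{\be+2}P_{n-2}^{\al+2,\be+2}$. On such a weight the relevant tool is the two-sided (Rodrigues-type) lowering formula
\[ D_x^{k}\big[(x-1)^{a}(x+1)^{b}P_m^{a,b}(x)\big]=2^{k}\,(m+1)_k\,(x-1)^{a-k}(x+1)^{b-k}P_{m+k}^{a-k,b-k}(x), \]
so the first block $D_x^{\al+\be+3}$ (with $a=\al+2,\,b=\be+2,\,m=n-2$) yields $2^{\al+\be+3}(n-1)_{\al+\be+3}\,(x-1)^{-\be-1}(x+1)^{-\al-1}P_{n+\al+\be+1}^{-\be-1,-\al-1}$. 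The delicate point is that this intermediate is a Jacobi polynomial with \emph{negative} parameters, carried by negative boundary powers. Multiplying by the middle weight $(x-1)^{\be+1}(x+1)^{\al+1}$ cancels those powers exactly and leaves a multiple of $P_{n+\al+\be+1}^{-\be-1,-\al-1}$, after which the degree-lowering rule with $k=\al+\be+3$ sends the parameters back up to $(\al+2,\be+2)$, lowers the degree to $n-2$, and contributes the factor $(n)_{\al+\be+3}$, the Pochhammer base being $(n+\al+\be+1)+(-\be-1)+(-\al-1)+1=n$. The powers of $2$ cancel, the two Pochhammer factors multiply to $(n-1)_{\al+\be+3}(n)_{\al+\be+3}=\La_{2\al+2\be+6,n}^{\al,\be}$, and the outer $(x^2-1)$ reinstates $S_n^{\al,\be}$. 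Before trusting this chain I must check that no degree collapse occurs in the negative-parameter step, i.e. that the leading coefficient of $P_{n+\al+\be+1}^{-\be-1,-\al-1}$, proportional to $(n)_{n+\al+\be+1}$, is nonzero; it is, being a product of positive integers. Justifying this passage through the degenerate parameter range, and keeping the accompanying constants straight, is where the genuine care of the argument lies.
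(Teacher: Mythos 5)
Your treatment of (2.10)--(2.12) is sound and essentially the paper's own argument: (2.10) is the classical Jacobi equation, and (2.11) is proved by peeling off the boundary factors with the iterated forms of the one-step rules (2.16)--(2.17), exactly as in the paper; your reduction of (2.12) to (2.11) via the reflection $x\mapsto-x$, $\al\leftrightarrow\be$ together with (1.7) is a legitimate small shortcut (the paper simply repeats the computation with the roles of $x\pm1$ exchanged).

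The proof of (2.13), however, has a genuine gap at precisely the point you flag and do not close. Your two-sided lowering formula
\begin{equation*}
D_x^{k}\big[(x-1)^{a}(x+1)^{b}P_m^{a,b}(x)\big]=2^{k}(m+1)_k\,(x-1)^{a-k}(x+1)^{b-k}P_{m+k}^{a-k,b-k}(x)
\end{equation*}
is obtained by iterating (2.15), whose hypothesis requires both exponents to stay positive; starting from $a=\al+2$, $b=\be+2$ the iteration is therefore licensed for at most $\min(\al,\be)+2$ of the $\al+\be+3$ required steps. Beyond that the right-hand side involves $P_{n+\al+\be+1}^{-\be-1,-\al-1}$, and for negative integer parameters the defining representation (1.2) degenerates (the prefactor $(\al+1)_n/n!$ vanishes while the ${}_2F_1$ acquires a nonpositive integer denominator parameter), so this symbol first needs a definition --- e.g. Szeg\H{o}'s degenerate-parameter identity $P_n^{-k,\be}(x)=c_{n,k,\be}\left(\tfrac{x-1}{2}\right)^{k}P_{n-k}^{k,\be}(x)$ and its two-sided analogue --- and then both your lowering formula and the degree-lowering rule $D_x^{k}P_m^{\ga,\de}=2^{-k}(m+\ga+\de+1)_kP_{m-k}^{\ga+k,\de+k}$ (which is (2.14) iterated, stated only for $\ga,\de>-1$) must be re-verified on these degenerate objects. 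Checking that the leading coefficient does not vanish is not a substitute for this. The paper avoids the issue entirely by splitting each block $D_x^{\al+\be+3}$ into three stages: (2.15) is applied only until one exponent reaches $0$ (that is $\be+2$ steps), then the one-sided rule (2.16) (or (2.17) if $\al<\be$) takes over for $|\al-\be|$ steps, and (2.14) finishes; every intermediate Jacobi polynomial then has nonnegative parameters, yielding (2.18)--(2.19) and hence (2.13). Your constants do come out right, which indicates the negative-parameter route can be made rigorous, but as written the decisive identity is asserted rather than proved.
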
	
 \begin{proof} 
 A combination of the two differentiation formulas (cf. \cite [10.8.(17),(38)]{HTF2})
  \begin{equation}
  D_x P_n^{\ga,\de}(x)=
  \tfrac{1}{2}(n+\ga+\de+1)P_{n-1}^{\ga+1,\de+1}(x),\;\ga,\de >-1
       \label{eq2.14}
     \end{equation}
  \begin{equation}
  D_x [(x-1)^{\ga}(x+1)^{\de}P_n^{\ga,\de}(x)]=
   2(n+1)(x-1)^{\ga-1}(x+1)^{\de-1}P_{n+1}^{\ga-1,\de-1}(x),\;\ga,\de >0
         \label{eq2.15}
       \end{equation}
  readily yields the classical Jacobi equation (2.10) via
  \begin{equation*}
  \begin{aligned}
     L_{2,x}^{\al,\be}P_n^{\al,\be}(x) =&
          \frac{n+\al+\be+1}{2(x-1)^{\al}(x+1)^{\be}}D_x\big\lbrack (x-1)^{\al +1}(x+1)^{\be+1}P_{n-1}^{\al+1,\be+1}(x)\big\rbrack\\
          =&n(n+\al+\be+1)P_n^{\al,\be}(x)=\La_{2,n}^{\al,\be} P_n^{\al,\be}(x).
    \end{aligned}
    \end{equation*}
   In order to carry out the higher-order derivatives occurring in (2.6)\textendash (2.8), we iteratively use two further differentiation formulas which may be derived from standard properties of the Jacobi polynomials (cf.\cite [10.8]{HTF2}), 
    \begin{equation}
     D_x \big\lbrack (x-1)^{\ga}P_n^{\ga,\de}(x)\big\rbrack =
     (n+\ga)(x-1)^{\ga-1}P_n^{\ga-1,\de+1}(x),\;\ga >0,\;\de >-1
          \label{eq2.16}
        \end{equation}
     \begin{equation}
     D_x \big\lbrack (x+1)^{\de}P_n^{\ga,\de}(x)\big\rbrack =
      (n+\de)(x+1)^{\de-1}P_n^{\ga+1,\de-1}(x),\;\ga >-1,\;\de >0.
            \label{eq2.17}
          \end{equation} 
   Concerning (2.11) it follows that for each $n\in\mathbb{N}$ , 
   \begin{equation*}
    \begin{aligned}
    \widetilde{L}_{2\be+4,x}^{\be,\al}Q_n^{\al,\be}(x) =&
   \frac{x+1}{(x-1)^{\al}}D_x^{\be+2}\big\lbrace (x-1)^{\al+\be+2}D_x^{\be+2} \big\lbrack (x+1)^{\be+2} q_n^{\al,\be} P_{n-1}^{\al,\be+2}(x)\big\rbrack \big\rbrace\\
    =& q_n^{\al,\be} \frac{x+1}{(x-1)^{\al}} D_x^{\be+2}\big\lbrace (x-1)^{\al+\be+2}(n)_{\be+2} P_{n-1}^{\al+\be+2,0}(x)\big\rbrace\\
   =& q_n^{\al,\be}(x+1)(n)_{\be+2}(n+\al)_{\be+2}\, 
   P_{n-1}^{\al,\be+2}(x)= \La_{2\be+4,n}^{\be,\al}Q_n^{\al,\be}(x).             \end{aligned}
    \end{equation*}
   Similarly, (2.12) is achieved by (cf.\cite[(4.7)]{Ma2})
   \begin{equation*}
       \begin{aligned}
       L_{2\al+4,x}^{\al,\be}R_n^{\al,\be}(x) =&
      \frac{x-1}{(x+1)^{\be}}D_x^{\al+2}\big\lbrace (x+1)^{\al+\be+2}D_x^{\al+2} \big\lbrack (x-1)^{\al+2} r_n^{\al,\be} P_{n-1}^{\al+2,\be}(x)\big\rbrack \big\rbrace\\
       =& r_n^{\al,\be} \frac{x-1}{(x+1)^{\be}} D_x^{\al+2}\big\lbrace (x+1)^{\al+\be+2}(n)_{\al+2} P_{n-1}^{0,\al+\be+2}(x)\big\rbrace\\
      =& r_n^{\al,\be}(x-1)(n)_{\al+2}(n+\be)_{\al+2}\, 
      P_{n-1}^{\al+2,\be}(x)= \La_{2\al+4,n}^{\al,\be}R_n^{\al,\be}(x).             \end{aligned}
       \end{equation*}
   Concerning equation (2.13), we first apply formula (2.15), then (2.16) if $\al \ge\be$ [or (2.17) if $\al < \be$], and finally (2.14) to obtain, for $n \ge 2$,
    \begin{equation}
     \begin{aligned}
     D&_x^{\al+\be+3}\big\lbrack (x-1)^{\al+2}(x+1)^{\be+2}
     P_{n-2}^{\al+2,\be+2}(x)\big\rbrack\\
      =& 2^{\be+2}(n-1)_{\be+2}D_x^{\al+1}\big\lbrack (x-1)^{\al-\be} P_{n+\be}^{\al-\be,0}(x)\big\rbrack\\
      =& 2^{\be+2}(n-1)_{\be+2}(n+\be+1)_{\al-\be}D_x^{\be+1}
      \big\lbrack P_{n+\be}^{0,\al-\be}(x)\big\rbrack\\
      =& 2^{\be+2}(n-1)_{\al+2}2^{-\be-1}(n+\al+1)_{\be+1} P_{n-1}^{\be+1,\al+1}(x)\\ 
      =& 2(n-1)_{\al+\be+3}P_{n-1}^{\be+1,\al+1}(x).
      \end{aligned}
      \label{eq2.18}   
      \end{equation}   
   Moreover, in view of (2.18) and (2.14), it follows that
   \begin{equation}
        \begin{aligned}
    D&_x^{\al+\be+3}\big\lbrack (x-1)^{\be+1}(x+1)^{\al+1}
    P_{n-1}^{\be+1,\al+1}(x)\big\rbrack\\
    =& 2(n)_{\al+\be+1}D_x^2 P_n^{\al,\be}(x)
    = \tfrac{1}{2}(n)_{\al+\be+3} P_{n-2}^{\al+2,\be+2}(x).
    \end{aligned}
    \label{eq2.19}   
     \end{equation}   
   Putting together the two identities (2.18) and (2.19), we arrive at the required identity
   \begin{equation*}
   \begin{aligned}
   L&_{2\al+2\be +6,x}^{\al,\be}S_n^{\al,\be}(x)\\
   =&(x^2-1)D_x^{\al+\be+3}\big\lbrace (x-1)^{\be+1} (x+1)^{\al+1}D_x^{\al+\be+3} \big\lbrack (x-1)^{\al+2} (x+1)^{\be+2} s_n^{\al,\be} P_{n-2}^{\al+2,\be+2}(x)\big\rbrack\big\rbrace\\
   =&s_n^{\al,\be}(x^2-1) 2(n-1)_{\al+\be+3} D_x^{\al+\be+3}\big\lbrace (x-1)^{\be+1}(x+1)^{\al+1} P_{n-1}^{\be+1,\al+1}(x)\big\rbrace\\ 
   =&s_n^{\al,\be}(x^2-1)(n-1)_{\al+\be+3}(n)_{\al+\be+3}\,  P_{n-2}^{\al+2,\be+2}(x)= \La_{2\al+2\be+6,n}^{\al,\be}S_n^{\al,\be}(x). 
   \end{aligned}
   \end{equation*}   
     \end{proof} 
  At this stage, we recognize that certain counterparts of the identities (2.11)\textendash (2.13) are achieved via the following factorization of the three higher-order differential operators in equation (1.8). Recalling the values of the constants (2.1), there hold (see \cite[Sec. 2]{Ba})
   \begin{equation}
    \begin{aligned}
  A_x^{\al,\be}y(x)&=b_{\be,\al}^{-1}\prod_{j=0}^{\be+1}\big\lbrace L_{2,x}^{\al,\be}+\frac{2(\be+1)}{x+1}+j(\al+\be+1-j)\big\rbrace y(x),\\
  B_x^{\al,\be}y(x)&=b_{\al,\be}^{-1}\prod_{j=0}^{\al+1}\big\lbrace L_{2,x}^{\al,\be}-\frac{2(\al+1)}{x-1}+j(\al+\be+1-j)\big\rbrace y(x),\\     
  C_x^{\al,\be}y(x)&=c_{\al,\be}^{-1}\prod_{j=0}^{\al+\be +2}\big\lbrace L_{2,x}^{\al,\be}+\frac{2(\be+1)}{x+1}-\frac{2(\al+1)}{x-1}
  +j(\al+\be+1-j)\big\rbrace y(x).      
    \end{aligned}
     \label{eq2.20}   
     \end{equation}    
   \begin{proposition}	
   \label{prop2.3}
   \cite[Sec. 3]{Ba} The three operators defined in (2.20) satisfy 
         \begin{equation}
   b_{\be,\al} A_x^{\al,\be} Q_n^{\al,\be}(x) =
    \La_{2\be+4,n}^{\be,\al}Q_n^{\al,\be}(x),\;n \ge 1,
          \label{eq2.21}
        \end{equation}
         \begin{equation} 
   b_{\al,\be} B_x^{\al,\be} R_n^{\al,\be}(x) =
     \La_{2\al+4,n}^{\al,\be}R_n^{\al,\be}(x)\;n \ge 1,
          \label{eq2.22}
        \end{equation}     
       \begin{equation}
   c_{\al,\be} C_x^{\al,\be} S_n^{\al,\be}(x) =
     \La_{2\al+2\be+6,n}^{\al,\be}S_n^{\al,\be}(x)\;n \ge 2.
          \label{eq2.23}
          \end{equation}    
    \end{proposition}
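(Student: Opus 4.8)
The plan is to prove all three identities by the same mechanism that underlies the factorization (2.20): each operator is a product of mutually commuting second-order factors of the form (one fixed ``dressed'' Jacobi operator) $+\,j(\al+\be+1-j)$, and the relevant component polynomial turns out to be a \emph{simultaneous} eigenfunction of every single factor. Once this is established the product simply multiplies the factor eigenvalues, and it only remains to check that these multiply out to the Pochhammer products recorded in (2.2). I would treat $A_x^{\al,\be}$ first and obtain $B_x^{\al,\be}$ from it by the reflection $x\mapsto-x$, $\al\leftrightarrow\be$ (under which $L_{2,x}^{\al,\be}$ becomes $L_{2,x}^{\be,\al}$, compatibly with (1.7)), leaving the combined operator $C_x^{\al,\be}$ as the genuinely new case.

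The crux is a conjugation (intertwining) identity that turns each dressed Jacobi operator into an honest Jacobi operator with shifted parameters. For $A_x^{\al,\be}$ I would verify, for any smooth $g$,
\[
\big[L_{2,x}^{\al,\be}+\tfrac{2(\be+1)}{x+1}\big]\big[(x+1)g\big]=(x+1)\big[L_{2,x}^{\al,\be+2}+(\al+\be+2)\big]g,
\]
a short direct computation in which the second- and first-order parts immediately carry a factor $(x+1)$ while the surviving zero-order terms $\al-\be+(\al+\be+2)x+2(\be+1)$ collapse to $(\al+\be+2)(x+1)$. Its mirror image is
\[
\big[L_{2,x}^{\al,\be}-\tfrac{2(\al+1)}{x-1}\big]\big[(x-1)g\big]=(x-1)\big[L_{2,x}^{\al+2,\be}+(\al+\be+2)\big]g,
\]
and composing the two (first conjugating by $(x+1)$, then by $(x-1)$ with $\be$ raised to $\be+2$) yields the identity needed for $C_x^{\al,\be}$,
\[
\big[L_{2,x}^{\al,\be}+\tfrac{2(\be+1)}{x+1}-\tfrac{2(\al+1)}{x-1}\big]\big[(x^2-1)g\big]=(x^2-1)\big[L_{2,x}^{\al+2,\be+2}+2(\al+\be+3)\big]g.
\]

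With these in hand the telescoping is immediate. Applying the first identity to $g=P_{n-1}^{\al,\be+2}$ and using $L_{2,x}^{\al,\be+2}P_{n-1}^{\al,\be+2}=(n-1)(n+\al+\be+2)P_{n-1}^{\al,\be+2}$, one finds that $Q_n^{\al,\be}=q_n^{\al,\be}(x+1)P_{n-1}^{\al,\be+2}$ is an eigenfunction of the $j$-th factor with eigenvalue $n(n+\al+\be+1)+j(\al+\be+1-j)$; the same holds for $R_n^{\al,\be}$ and $S_n^{\al,\be}$ with base eigenvalues $(n-1)(n+\al+\be+2)$ and $(n-2)(n+\al+\be+3)$, since the added constants $\al+\be+2$ and $2(\al+\be+3)$ are exactly what is needed to reduce every base eigenvalue back to $n(n+\al+\be+1)$. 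The decisive algebraic observation is the factorization $n(n+\al+\be+1)+j(\al+\be+1-j)=(n+j)(n+\al+\be+1-j)$, so that the product over $j$ splits into two telescoping strings of consecutive integers. For $A_x$ this gives $\prod_{j=0}^{\be+1}(n+j)(n+\al+\be+1-j)=(n)_{\be+2}(n+\al)_{\be+2}=\La_{2\be+4,n}^{\be,\al}$, and analogously $(n)_{\al+2}(n+\be)_{\al+2}=\La_{2\al+4,n}^{\al,\be}$ for $B_x$ and $(n-1)_{\al+\be+3}(n)_{\al+\be+3}=\La_{2\al+2\be+6,n}^{\al,\be}$ for $C_x$.

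The main obstacle is the combined intertwining identity for $C_x^{\al,\be}$: one must track carefully that successive conjugation by $(x+1)$ and then by $(x-1)$ produces precisely $L_{2,x}^{\al+2,\be+2}$ with the correct additive constant $2(\al+\be+3)$, because a wrong constant would break the reduction of the base eigenvalue to $n(n+\al+\be+1)$ and hence the clean product factorization. Everything else — the three gauge computations and the Pochhammer bookkeeping — is mechanical, and the stated ranges $n\ge1$ for $A_x,B_x$ and $n\ge2$ for $C_x$ are exactly those on which $P_{n-1}^{\al,\be+2}$, $P_{n-1}^{\al+2,\be}$ and $P_{n-2}^{\al+2,\be+2}$ are defined.
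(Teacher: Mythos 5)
Your proposal is correct and follows essentially the same route as the paper: the paper's proof (recalled from Bavinck) likewise pulls the factor $(x^2-1)$ through each dressed second-order factor to obtain $L_{2,x}^{\al+2,\be+2}+(j+2)(\al+\be+3-j)$, applies the shifted Jacobi equation, and uses the same factorization $(n+j)(n+\al+\be+1-j)$ to telescope the product into the Pochhammer symbols of (2.2). Your explicit conjugation identities and the reflection argument for $B_x^{\al,\be}$ merely spell out details the paper dismisses as analogous.
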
	 
   \begin{proof} 
   It suffices to recall the proof of (2.23). The other two identities hold analogously, see, e.g., \cite[Cor. 4.1]{Ma2}. A successive application of the $\al+\be+3$ factors in the product representing $C_x^{\al,\be}$ as well as of the Jacobi equation (2.10) with the parameters shifted to $\al+2,\be+2$ yields
   \begin{equation*}
   \begin{aligned}
   &c_{\al,\be} C_x^{\al,\be}  S_n^{\al,\be}(x)\\
   &=\prod_{j=0}^{\al+\be+2}\big\lbrace L_{2,x}^{\al,\be}+\frac{2(\be+1)}{x+1}-\frac{2(\al+1)}{x-1}
     +j(\al+\be+1-j)\big\rbrace \lbrack s_n^{\al,\be}(x^2-1)P_{n-2}^{\al+2,\be+2}(x)\rbrack\\
   &=s_n^{\al,\be}(x^2-1)\prod_{j=0}^{\al+\be+2}\big\lbrace L_{2,x}^{\al+2,\be+2}
        +(j+2)(\al+\be+3-j)\big\rbrace P_{n-2}^{\al+2,\be+2}(x)\\
   &=s_n^{\al,\be}(x^2-1)\prod_{j=0}^{\al+\be+2}\big\lbrace (n-2)(n+\al+\be+3)+
           (j+2)(\al+\be+3-j)\big\rbrace \cdot P_{n-2}^{\al+2,\be+2}(x)\\
    &=\prod_{j=0}^{\al+\be+2}\big\lbrace (n+j)(n+\al+\be+1-j)
     \big\rbrace \cdot s_n^{\al,\be}(x^2-1)P_{n-2}^{\al+2,\be+2}(x)\\
    &=(n-1)_{\al+\be+3}(n)_{\al+\be+3}\,S_n^{\al,\be}(x). 
    \end{aligned}
    \end{equation*}   
   \end{proof} 
  \noindent {\itshape Proof of Theorem 2.1.} Comparing the results of the two Propositions 2.2 and 2.3 we find that 
   \begin{equation}
   \begin{aligned}
   &\big\lbrack \widetilde{L}_{2\be+4,x}^{\be,\al}-
      b_{\be,\al} A_x^{\al,\be}\big\rbrack Q_n^{\al,\be}(x)=0,\;n \ge 1,\\
   &\big\lbrack L_{2\al+4,x}^{\al,\be}-
      b_{\al,\be} B_x^{\al,\be}\big\rbrack R_n^{\al,\be}(x)=0,\;n \ge 1,\\
  &\big\lbrack  L_{2\al+2\be +6,x}^{\al,\be}-
      c_{\al,\be} C_x^{\al,\be}\big\rbrack S_n^{\al,\be}(x)=0,\;n \ge 2.
    \end{aligned}
     \label{eq2.24}
      \end{equation}    
  In order to show that in all three cases, the difference of the two operators on the left-hand side vanishes on the space of all polynomials and thus justify the original setting (2.3), we follow the arguments used in \cite{Ba}. In fact, the polynomials $\{Q_n^{\al,\be}\}_{n=1}^{\infty}, \{R_n^{\al,\be}\}_{n=1}^{\infty}$, and $\{S_n^{\al,\be}\}_{n=2}^{\infty}$ form three bases for the polynomials divisible by $x+1, x-1$, and $x^2-1$, respectively. So it remains to verify that, similarly to $A_x^{\al,\be}, B_x^{\al,\be}, C_x^{\al,\be}$, the differential operators $\widetilde{L}_{2\be+4,x}^{\be,\al}, L_{2\al+4,x}^{\al,\be}$ map constants into $0$, while $L_{2\al+2\be +6,x}^{\al,\be}$ does it with any linear function. This, however, follows easily by definition (2.6)\textendash(2.8). In particular, if $y(x)$ is a linear function, the inner derivative in (2.8) is a constant, so that the outer derivative annihilates the whole expression. Hence, our representation (2.4) of the generalized Jacobi equation matches with the one Bavinck stated in \cite [(1.8),(2.19)]{Ba}.\\       
  The expansions (2.9) as well as their highest coefficient functions follow directly from the representations (2.5)\textendash(2.8). This concludes the proof of Theorem 2.1.\hfill $\Box$   
 \begin{corollary}
  \label{cor2.4}
  The sixth-order differential equation satisfied by the Krall polynomials $y_n(x)=P_n^{0,0,M,N}(x)$, $M,N>0$, $n\in\mathbb{N}_0$, takes the elementary form
  \begin{equation}
  \begin{aligned}
  D_x\big\lbrack(x^2-1)D_x y_n(x)\big\rbrack &+\frac{M}{2}(x+1)D_x^2 \big \lbrace (x-1)^2 D_x^2 \big\lbrack (x+1)y_n(x)\big\rbrack \big\rbrace \\
  &+\frac{N}{2}(x-1)D_x^2 \big\lbrace (x+1)^2 D_x^2\big\lbrack (x-1)y_n(x)\big\rbrack\big \rbrace\\
  &+\frac{MN}{3}(x^2-1)D_x^3 \big\lbrace (x^2-1) D_x^3 \big\lbrack (x^2-1)y_n(x)\big\rbrack\big\rbrace\\  
  &=(n)_2\big\lbrace 1+\frac{M+N}{2}(n)_2+\frac{MN}{3}(n-1)_4\big\rbrace
   y_n(x).
   \end{aligned}
   \label{eq2.25}
   \end{equation}    
   \end{corollary}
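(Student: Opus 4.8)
The plan is to read off (2.25) directly from Theorem 2.1 by specializing to $\al=\be=0$; no argument beyond substitution and a little Pochhammer bookkeeping should be required. First I would evaluate the two normalizing constants (2.1) at $\al=\be=0$, obtaining $b_{0,0}=2!\,(1)_{1}=2$ and $c_{0,0}=1\cdot1\cdot3\cdot(1!)^{2}=3$. Hence the four prefactors $1,\,M/b_{\be,\al},\,N/b_{\al,\be},\,MN/c_{\al,\be}$ appearing in (2.4) collapse to $1,\,M/2,\,N/2,\,MN/3$, which are exactly the coefficients multiplying the operators on the left of (2.25).

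Next I would substitute $\al=\be=0$ into the operator templates (2.5)--(2.8), where every exponent reduces to a small integer. Thus (2.5) becomes $D_x[(x^2-1)D_x y]$; the tilde operator (2.6) becomes $(x+1)D_x^{2}\{(x-1)^{2}D_x^{2}[(x+1)y]\}$; (2.7) becomes $(x-1)D_x^{2}\{(x+1)^{2}D_x^{2}[(x-1)y]\}$; and (2.8) becomes $(x^2-1)D_x^{3}\{(x^2-1)D_x^{3}[(x^2-1)y]\}$. Multiplying these by the prefactors just computed and summing reproduces the entire left-hand side of (2.25), so the operator part needs only careful substitution and a check that the outer/inner factors land on $(x+1)$ versus $(x-1)$ as in (2.6)--(2.7).

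It then remains to collect the eigenvalues. From (1.9) and (2.2) at $\al=\be=0$ one has $\La_{2,n}^{0,0}=(n)_2$, both $\La_{2\be+4,n}^{\be,\al}$ and $\La_{2\al+4,n}^{\al,\be}$ equal to $(n)_2(n)_2$, and $\La_{2\al+2\be+6,n}^{0,0}=(n-1)_3(n)_3$. Moving the eigenvalue terms of (2.4) to the right-hand side and inserting the prefactors yields $(n)_2+\tfrac{M+N}{2}(n)_2^{2}+\tfrac{MN}{3}(n-1)_3(n)_3$. The one genuine computation---the step I would double-check---is the elementary Pochhammer identity $(n-1)_3(n)_3=(n)_2(n-1)_4$, both sides equalling $(n-1)n^{2}(n+1)^{2}(n+2)$. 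This lets me factor $(n)_2$ out of all three terms and arrive at $(n)_2\{1+\tfrac{M+N}{2}(n)_2+\tfrac{MN}{3}(n-1)_4\}$, precisely the right-hand side of (2.25). Since the whole corollary reduces to specialization of Theorem 2.1 together with this one-line identity, I expect no substantive obstacle; the only place to be cautious is the Pochhammer rearrangement and the parameter swaps $\be,\al\leftrightarrow\al,\be$ in the $M$-term constants and eigenvalues.
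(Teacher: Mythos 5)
Your proposal is correct and matches the paper's intent exactly: the paper gives no separate proof of Corollary 2.4, treating it as the direct specialization $\al=\be=0$ of Theorem 2.1, which is precisely what you carry out. Your constants $b_{0,0}=2$, $c_{0,0}=3$, the reduced operators, and the Pochhammer identity $(n-1)_3(n)_3=(n)_2(n-1)_4$ all check out.
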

       
    Inserting the decomposition (1.3) of the generalized Jacobi polynomials, 
    \begin{equation*}
    y_n(x)= P_n^{\al,\be,M,N}(x)=P_n^{\al,\be}(x)+M\,Q_n^{\al,\be}(x)
    +N\,R_n^{\al,\be}(x)+MN\,S_n^{\al,\be}(x), \; n\in\mathbb{N}_{0}, 
     \end{equation*}	
   into the equation (2.4), we easily see that all terms of equal factors of the parameters $M$ and $N$ must vanish simultaneously. So in addition to the four identities stated in Proposition 2.2, we obtain four two-term identities as well as one with four terms. 
    \begin{corollary}
       \label{cor2.5}
     For $\al,\be\in\mathbb{N}_0,\;M,N>0$, and all $n\in\mathbb{N}_0$,
      \begin{equation}
     \big\lbrack L_{2,x}^{\al,\be}-\La_{2,n}^{\al,\be}\big\rbrack Q_n^{\al,\be}(x)+
     b_{\be ,\al}^{-1}\big\lbrack \widetilde{L}_{2\be+4,x}^{\be,\al}- \La_{2\be+4,n}^{\be,\al}\big\rbrack P_n^{\al,\be}(x)=0,   
         \label{eq2.26}
         \end{equation}
     \begin{equation}
      \big\lbrack L_{2,x}^{\al,\be}-\La_{2,n}^{\al,\be}\big\rbrack R_n^{\al,\be}(x)+
      b_{\al,\be}^{-1}\big\lbrack L_{2\al+4,x}^{\al,\be}- \La_{2\al+4,n}^{\al,\be}\big\rbrack P_n^{\al,\be}(x)=0,   
       \label{eq2.27}
       \end{equation}      
     \begin{equation}
     b_{\be,\al}^{-1}\big\lbrack \widetilde{L}_{2\be+4,x}^{\be,\al}- \La_{2\be+4,n}^{\be,\al}\big\rbrack S_n^{\al,\be}(x)+
     c_{\al,\be}^{-1}\big\lbrack L_{2\al+2\be+6,x}^{\al,\be}-
     \La_{2\al+2\be+6,n}^{\al,\be}\big\rbrack Q_n^{\al,\be}(x)=0,
     \label{eq2.28}
      \end{equation}      
     \begin{equation}
     b_{\al,\be}^{-1}\big\lbrack L_{2\al+4,x}^{\al,\be}- \La_{2\al+4,n}^{\al,\be}\big\rbrack S_n^{\al,\be}(x)+
     c_{\al,\be}^{-1}\big\lbrack L_{2\al+2\be+6,x}^{\al,\be}-
     \La_{2\al+2\be+6,n}^{\al,\be}\big\rbrack R_n^{\al,\be}(x)=0, 
      \label{eq2.29}
      \end{equation}   
     \begin{equation}
     \begin{aligned}     
    &\big\lbrack L_{2,x}^{\al,\be}-\La_{2,n}^{\al,\be}\big\rbrack S_n^{\al,\be}(x)+
    b_{\be,\al}^{-1}\big\lbrack \widetilde{L}_{2\be+4,x}^{\be,\al}- \La_{2\be+4,n}^{\be,\al}\big\rbrack R_n^{\al,\be}(x)+\\
    &b_{\al,\be}^{-1}\big\lbrack L_{2\al+4,x}^{\al,\be}- \La_{2\al+4,n}^{\al,\be}\big\rbrack Q_n^{\al,\be}(x)+
    c_{\al,\be}^{-1}\big\lbrack L_{2\al+2\be+6,x}^{\al,\be}-
     \La_{2\al+2\be+6,n}^{\al,\be}\big\rbrack P_n^{\al,\be}(x)=0. 
    \label{eq2.30} 
     \end{aligned}
     \end{equation}   
     \end{corollary}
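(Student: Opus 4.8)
The plan is to feed the decomposition (1.3) of $y_n(x)=P_n^{\al,\be,M,N}(x)$ directly into the left-hand side of the generalized Jacobi equation (2.4), which Theorem 2.1 has just shown to vanish for every $n\in\mathbb{N}_0$ and every admissible choice of parameters. Since none of the four operators, nor any of the four eigenvalue components $\La$ appearing in (2.4), depends on $M$ or $N$, the substituted left-hand side is a genuine polynomial in the two variables $M,N$, of degree at most two in each. As it vanishes for all $M,N>0$, every coefficient of a monomial $M^iN^j$ must vanish separately, and these coefficients are exactly the relations to be proved.

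To organise the expansion I would abbreviate the four bracketed differences in (2.4) by $\mathcal{L}_0=L_{2,x}^{\al,\be}-\La_{2,n}^{\al,\be}$ and, with the normalising factors absorbed, by $\mathcal{L}_M$, $\mathcal{L}_N$, $\mathcal{L}_{MN}$ for the three higher-order components carrying $1/b_{\be,\al}$, $1/b_{\al,\be}$, $1/c_{\al,\be}$. The left-hand side of (2.4) then reads
\[
\big(\mathcal{L}_0+M\mathcal{L}_M+N\mathcal{L}_N+MN\mathcal{L}_{MN}\big)
\big(P_n^{\al,\be}+MQ_n^{\al,\be}+NR_n^{\al,\be}+MNS_n^{\al,\be}\big),
\]
and I would simply multiply out the sixteen products and sort them according to the monomial $M^iN^j$, $0\le i,j\le 2$. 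The four pure coefficients, belonging to $M^0N^0$, $M^2N^0$, $M^0N^2$ and $M^2N^2$, reproduce precisely the four eigenvalue equations (2.10)--(2.13) of Proposition 2.2, so their vanishing is already known; this serves as a welcome internal check that the polynomial split and the operator split are mutually consistent.

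The five remaining, genuinely mixed coefficients then deliver the asserted identities. The coefficients of $M^1N^0$ and $M^0N^1$ pair $\mathcal{L}_0$ with $Q_n^{\al,\be}$ resp. $R_n^{\al,\be}$ and $\mathcal{L}_M$ resp. $\mathcal{L}_N$ with $P_n^{\al,\be}$, giving the two-term identities (2.26) and (2.27). The coefficients of $M^2N^1$ and $M^1N^2$ pair $\mathcal{L}_M$ resp. $\mathcal{L}_N$ with $S_n^{\al,\be}$ and $\mathcal{L}_{MN}$ with $Q_n^{\al,\be}$ resp. $R_n^{\al,\be}$, giving (2.28) and (2.29). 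Finally the central coefficient of $M^1N^1$ collects the four contributions $\mathcal{L}_0S_n^{\al,\be}$, $\mathcal{L}_MR_n^{\al,\be}$, $\mathcal{L}_NQ_n^{\al,\be}$ and $\mathcal{L}_{MN}P_n^{\al,\be}$, which is exactly the four-term identity (2.30).

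I do not expect a substantial obstacle, as Theorem 2.1 already supplies the vanishing of the whole expression and everything else is linear bookkeeping. The only points demanding care are to keep track of which of the four polynomial pieces is matched with which operator in each cross term, and to justify the separation of coefficients, for which it suffices that a polynomial in $M,N$ vanishing on the open quadrant $M,N>0$ is identically zero. The low-index cases cause no trouble: since the conventions $Q_0^{\al,\be}=R_0^{\al,\be}=S_0^{\al,\be}=S_1^{\al,\be}=0$ are already built into (1.3), each identity inherits its validity for all $n\in\mathbb{N}_0$ from that of (2.4), the omitted terms simply dropping out.
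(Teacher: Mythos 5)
Your proposal is correct and is precisely the paper's own argument: the paper obtains Corollary 2.5 by inserting the decomposition (1.3) into equation (2.4) and noting that the coefficients of the distinct monomials $M^iN^j$ must vanish separately, the pure powers reproducing Proposition 2.2 and the mixed ones yielding (2.26)--(2.30). Your bookkeeping of which operator meets which polynomial piece in each cross term, and your justification via the vanishing of a polynomial in $M,N$ on the open quadrant, match the paper's (more tersely stated) reasoning exactly.
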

 \noindent A direct verification of (2.27) and, likewise, of (2.26) is given in \cite[Sec. 5]{Ma2}. Moreover, we verified the five identities (2.26)\textendash (2.30) for various values of the parameters via MAPLE.\\
 
 Recently, A. J. Dur\'{a}n \cite{Du} developed a concept based on so-called $\mathcal{D}$-operators to construct orthogonal polynomial systems that are eigenfunctions of higher-order difference and differential equations. A particular example deals with the Jacobi-type equation with parameters $\al,\be$ and $M>0$. Adjusting the notations used in \cite [Sec. 8.2]{Du} to ours, these polynomials are determined as certain linear combinations of the Jacobi polynomials $P_{n-1}^{\al,\be+1}(x)$  and $P_n^{\al,\be+1}(x)$, which coincide with the Jacobi-type polynomials (1.3) for  $N=0$. Moreover, it turns out that the resulting differential equation can be written in the form (cf.(2.4))
   
  \begin{equation}
    \bigg\lbrace \big\lbrack L_{2,x}^{\al,\be}-n(n+\al +\be +1)\big\rbrack +  
    \frac{M}{(\be+2)!(\al +1)_{\be+1}}\big\lbrack
    \widetilde{L}_{2\be+4,x}^{\be,\al}-(n)_{\be+2}(n+\al)_{\be+2}
    \big\rbrack\bigg\rbrace y_n(x)=0.     
             \label{eq2.31}
   \end{equation}
  But in contrast to the representation (2.20), the higher-order differential operator $\widetilde{L}_{2\be+4,x}^{\be,\al}$ is given now as the product 
    \begin{equation}
    \widetilde{L}_{2\be+4,x}^{\be,\al}=L_{2,x}^{\al,-1}\prod_{j=0}^{\be}
    \big\lbrace L_{2,x}^{\al,\be+1}+(\al +1+j)(\be+1-j)\big\rbrace y(x),
     \label{eq2.32}
     \end{equation}
  involving the second-order operators (cf. (1.9))
      \begin{equation*}
        \begin{aligned}     
       L_{2,x}^{\al,-1}=&(x^2-1)D_x^2+(\al+1)(1+x)D_x,\\
         L_{2,x}^{\al,\be+1}=&(x^2-1)D_x^2+\left[\al-\be -1+(\al+\be+3)x\right]D_x. 
           \end{aligned}     
         \end{equation*}
  In order to verify that this operator satisfies the eigenvalue equation (2.11), as well, we proceed from the alternative setting (cf.\cite [10.8 (33)]{HTF2})
   \begin{equation}
    Q_n^{\al,\be}(x)=q_n^{\al,\be}(x+1)P_{n-1}^{\al,\be+2}(x)=q_n^{\al,\be}\;
    \frac{2(n+\be +1)P_{n-1}^{\al,\be+1}(x)+2n P_n^{\al,\be+1}(x)}{2n+\al +\be +1}.
      \label{eq2.33}
      \end{equation}
 Now we apply each operator under the product sign in (2.32) successively to the two terms on the right-hand side of (2.33) by employing the Jacobi equation with parameters $\al$ and $\be+1$. Eventually we observe that 
 \begin{equation*}
     \begin{aligned}     
   2\,L_{2,x}^{\al,-1}P_n^{\al,\be}(x)=&(x+1)\big\lbrace (x-1)D_x
   +(\al+1)\big\rbrace (n+\al+\be+1)P_{n-1}^{\al+1,\be+1}(x)\\
   =&(n+\al+\be+1)(n+\al)(x+1)P_{n-1}^{\al,\be+2}(x).
    \end{aligned}     
      \end{equation*}
 By symmetry, the operator $L_{2\al+4,x}^{\al,\be}$ in equation (2.4) has a representation analogous to (2.32). It would be interesting to find a similar expression for the operator $L_{2\al+2\be+6,x}^{\al,\be}$, as well.
   
 \section{The orthogonality of the eigensolutions}
 \label{sec:3}

 The aim of this section is to show that for different values of the combined eigenvalue parameter 
 
  \begin{equation}
   \La_{2\al+\be+6,n}^{\al,\be,M,N}=\La_{2,n}^{\al,\be}+  
   \frac{M}{b_{\be,\al}} \La_{2\be+4,n}^{\be,\al}+
   \frac{N}{b_{\al,\be}}\La_{2\al+4,n}^{\al,\be} +
   \frac{MN}{c_{\al,\be}}\La_{2\al+2\be+6,n}^{\al,\be},\;n\in\mathbb{N}_0,
    \label{eq3.1}
   \end{equation}
 the eigenfunctions of the generalized Jacobi equation (2.4) are orthogonal with respect to the scalar product
  \begin{equation}
  \begin{aligned}     
  &(f,g)_{w(\al,\be,M,N)}=(f,g)_{w(\al,\be)}+Mf(-1)g(-1)+Nf(1)g(1),\\ 
  &(f,g)_{w(\al,\be)}=h_{\al,\be}^{-1}\int_{-1}^{1}f(x)g(x)
  (1-x)^{\al}(1+x)^{\be}dx,\;f,g \in C[-1,1].
   \label{eq3.2} 
   \end{aligned}
   \end{equation}
  This in turn, is a direct consequence of the following fundamental symmetry relation.
  
  \begin{theorem}
   \label{thm3.1}
 Let $\al,\be\in\mathbb{N}_0,\;M,N>0$, and let, for sufficiently smooth function 
  $y(x)$,
  \begin{equation}
  L_{2\al+2\be +6,x}^{\al,\be,M,N}y(x)=\big\lbrack L_{2,x}^{\al,\be}+  
   \frac{M}{b_{\be,\al}} \widetilde{L}_{2\be+4,x}^{\be,\al}+
   \frac{N}{b_{\al,\be}}L_{2\al+4,x}^{\al,\be} +
   \frac{MN}{c_{\al,\be}}L_{2\al+2\be+6,x}^{\al,\be}\big\rbrack y(x)
  \label{eq3.3}
  \end{equation}
  denote the combined differential operator in equation (2.4). Then there holds 
  \begin{equation}
   \big(L_{2\al+2\be+6,x}^{\al,\be,M,N}f,g\big)_{w(\al,\be,M,N)}=
   \big(f,L_{2\al+2\be+6,x}^{\al,\be,M,N}g\big)_{w(\al,\be,M,N)},\;
    f,g \in C^{(2\al +2\be+6)}[-1,1].
   \label{eq3.4}
    \end{equation}   
   \end{theorem}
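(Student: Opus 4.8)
The plan is to study the bilinear form
$\Delta(f,g):=\big(L_{2\al+2\be+6,x}^{\al,\be,M,N}f,g\big)_{w(\al,\be,M,N)}-\big(f,L_{2\al+2\be+6,x}^{\al,\be,M,N}g\big)_{w(\al,\be,M,N)}$
and to show it vanishes. Since the scalar product (3.2) splits into the weighted integral over $[-1,1]$ plus the two point evaluations at $\pm1$, and the operator (3.3) is a linear combination of the four pieces with coefficients $1,\tfrac{M}{b_{\be,\al}},\tfrac{N}{b_{\al,\be}},\tfrac{MN}{c_{\al,\be}}$, the quantity $\Delta(f,g)$ is a polynomial in $M,N$. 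I would verify that the coefficient of each monomial $M^iN^j$ vanishes separately.

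The core tool is that, after multiplication by the weight, each of the four operators acquires a formally self-adjoint normal form: (2.5)--(2.8) exhibit $(x-1)^{\al}(x+1)^{\be}L\,y$ as $\phi\,D_x^{m}\big[\chi\,D_x^{m}(\phi y)\big]$, with $\phi,\chi$ suitable powers of $(x\pm1)$ and $m\in\{1,\be+2,\al+2,\al+\be+3\}$ ($m=1$, $\phi=1$ being the classical Sturm--Liouville case). Hence $m$-fold integration by parts (Lagrange's identity) renders the interior contribution to $\Delta$ symmetric and leaves, for each component, only a bilinear boundary concomitant at $x=\pm1$. The whole proof thereby reduces to a purely local identity in the Taylor jets of $f$ and $g$ at the two endpoints, which can be treated one endpoint at a time; by the reflection $x\mapsto-x,\ \al\leftrightarrow\be,\ M\leftrightarrow N$ underlying (1.7) it then suffices to analyse $x=-1$.

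Next I would record the decisive endpoint vanishings coming directly from the prefactors in (2.6)--(2.8): $(\widetilde L_{2\be+4,x}^{\be,\al}f)(-1)=0$ and $(L_{2\al+4,x}^{\al,\be}f)(1)=0$ because of the factors $(x+1)$ and $(x-1)$, while $(L_{2\al+2\be+6,x}^{\al,\be}f)(\pm1)=0$ because of $(x^2-1)$. These kill all prospective contributions of order $M^2$, $N^2$ and higher, so that $\Delta$ is in fact bilinear, of degree $\le 1$ in each of $M,N$. The constant term is the classical symmetry of $L_{2,x}^{\al,\be}$ (no boundary term). For the coefficient of $M$, supported at $x=-1$, the high vanishing order of the weight $(x-1)^{\al+\be+2}$ leaves in the Lagrange bracket of $\widetilde L_{2\be+4,x}^{\be,\al}$ a single surviving term proportional to $f'(-1)g(-1)-f(-1)g'(-1)$, which must cancel the point-mass contribution $[(L_{2,x}^{\al,\be}f)(-1)g(-1)-f(-1)(L_{2,x}^{\al,\be}g)(-1)]=-2(\be+1)[f'(-1)g(-1)-f(-1)g'(-1)]$; inserting $h_{\al,\be}$ and $b_{\be,\al}=(\be+2)!(\al+1)_{\be+1}$ confirms the two constants match. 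The coefficient of $N$ is the mirror statement at $x=+1$.

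The main obstacle is the coefficient of $MN$. Here the concomitant of the top operator $L_{2\al+2\be+6,x}^{\al,\be}$ contributes at \emph{both} endpoints and must cancel the cross point-mass terms $\tfrac1{b_{\al,\be}}[(L_{2\al+4,x}^{\al,\be}f)(-1)g(-1)-\cdots]$ at $-1$ and $\tfrac1{b_{\be,\al}}[(\widetilde L_{2\be+4,x}^{\be,\al}f)(1)g(1)-\cdots]$ at $+1$. The saving structure I would exhibit is that at $x=-1$ the vanishing orders of $\phi=(x-1)^{\al+1}(x+1)^{\be+1}$ and $\chi=(x-1)^{\be+1}(x+1)^{\al+1}$ again leave exactly one term in the bracket, and that both this term and $(L_{2\al+4,x}^{\al,\be}f)(-1)$ are governed by the \emph{same} jet $D_x^{\al+2}\big[(x-1)^{\al+1}f\big]\big|_{x=-1}$. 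Consequently the $MN$-identity at $-1$ collapses to a single scalar identity among $c_{\al,\be}$, $b_{\al,\be}$ and elementary binomial and power-of-two factors, exactly as in the $M$-case, with $x=+1$ the reflected statement. The delicate part, where I expect to spend the most effort, is the precise bookkeeping of these binomial and $(\pm2)$-power constants in the high-order Leibniz expansions: justifying the single-surviving-term reduction in the order-$(2\al+2\be+6)$ concomitant and checking that the constant is exactly right so that the bracket vanishes identically on jets.
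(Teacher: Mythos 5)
Your proposal follows essentially the same route as the paper: exploit the formally self-adjoint normal forms (2.5)--(2.8) to integrate each component by parts, identify the single surviving boundary term of each concomitant together with the endpoint values of the operators (the paper's Proposition 3.2), and check that these cancel the point-mass contributions, leaving a manifestly symmetric sum of four integrals. Your organization by powers of $M,N$ (with the $M^2$, $N^2$ terms killed by the prefactors $(x+1)$, $(x-1)$, $(x^2-1)$) and the reflection $x\mapsto -x$, $\al\leftrightarrow\be$, $M\leftrightarrow N$ to treat only one endpoint are minor economies over the paper's explicit two-endpoint bookkeeping, and the constant matches you indicate are exactly the identities the paper verifies.
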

 \begin{proposition}
  \label{prop3.2}
 For any $f,g \in C^{(2\al+2\be+6)}[-1,1]$ we define the following four integral expressions, each being symmetric in its two arguments, 
   \begin{equation*}
  \begin{aligned}     
   U^{\al,\be}(f,g)=h_{\al,\be}^{-1}&\int_{-1}^{1}f'(x)g'(x)
   (1-x)^{\al+1}(1+x)^{\be+1}dx,\\
  \widetilde{V}^{\be,\al}(f,g)=h_{\al,\be}^{-1}&\int_{-1}^{1}
  D_x^{\be+2}\big\lbrack (x+1)^{\be+1}f(x)\big\rbrack
  D_x^{\be+2}\big\lbrack (x+1)^{\be+1}g(x)\big\rbrack (1-x)^{\al+\be+2}dx,\\
  V^{\al,\be}(f,g)=h_{\al,\be}^{-1}&\int_{-1}^{1}
    D_x^{\al+2}\big\lbrack (x-1)^{\al+1}f(x)\big\rbrack
    D_x^{\al+2}\big\lbrack (x-1)^{\al+1}g(x)\big\rbrack (1+x)^{\al+\be+2}dx,\\ 
 W^{\al,\be}(f,g)=h_{\al,\be}^{-1}&\int_{-1}^{1}
     D_x^{\al+\be+3}\big\lbrack (x-1)^{\al+1}(x+1)^{\be+1}f(x)\big\rbrack \cdot\\
     &\cdot D_x^{\al+\be+3}\big\lbrack (x-1)^{\al+1}(x+1)^{\be+1}g(x)\big\rbrack
     (1-x)^{\be+1}(1+x)^{\al+1}dx.   
   \end{aligned}	 
  \end{equation*}  
 With $b_{\al,\be}$, $c_{\al,\be}$ as in (2.1), the four components of the differential operator (3.3) satisfy 
  \begin{align*}
 (i) \quad &\big(L_{2,x}^{\al,\be}f,g\big)_{w(\al,\be)}=U^{\al,\be}(f,g),\\
  (ii) \quad &\big(\widetilde{L}_{2\be+4,x}^{\be,\al}f,g\big)_{w(\al,\be)}=
  \widetilde{V}^{\be,\al}(f,g)+2(\be+1)b_{\be,\al}f'(-1)g(-1),\\
(iii) \quad &\big(L_{2\al+4,x}^{\al,\be}f,g\big)_{w(\al,\be)}=
  V^{\al,\be}(f,g)-2(\al+1)b_{\al,\be}f'(1)g(1),\\
 (iv) \quad &\big(L_{2\al+2\be+6,x}^{\al,\be}f,g\big)_{w(\al,\be)}=
   W^{\al,\be}(f,g)\\
    & \hspace{3.8cm} +\frac{c_{\al,\be}}{b_{\al,\be}}2(\be+1)_{\al+2}\,
   D_x^{\al +2}\big\lbrack (x-1)^{\al+1}f(x)\big\rbrack
   \big\vert_{x=-1}\, g(-1)\\
   & \hspace{3.8cm} -\frac{c_{\al,\be}}{b_{\be,\al}}2(\al+1)_{\be+2}\,
      D_x^{\be+2}\big\lbrack (x+1)^{\be+1}f(x)\big\rbrack\big\vert_{x=1}\, g(1),\\
 (v)\quad &L_{2,x}^{\al,\be}f(x)\big\vert_{x=-1}=-2(\be+1)f'(-1),\;
   L_{2,x}^{\al,\be}f(x)\big\vert_{x=1}=2(\al +1)f'(1),\\
   &\widetilde{L}_{2\be+4,x}^{\be,\al}f(x)\big\vert_{x=-1}=0,\;
   \widetilde{L}_{2\be+4,x}^{\be,\al}f(x)\big\vert_{x=1}=2(\al+1)_{\be+2}\,
   D_x^{\be +2}\big\lbrack (x+1)^{\be+1}f(x)\big\rbrack\big\vert_{x=1},\\
   &L_{2\al+4,x}^{\al,\be}f(x)\big\vert_{x=1}=0,\;
   L_{2\al+4,x}^{\al,\be}f(x)\big\vert_{x=-1}=-2(\be+1)_{\al+2}\,D_x^{\al+2}
   \big\lbrack(x-1)^{\al+1}f(x)\big\rbrack\big\vert_{x=-1},\\
   &L_{2\al+2\be+6,x}^{\al,\be}f(x)\big\vert_{x=\pm 1}=0.
 \end{align*}   
 \end{proposition}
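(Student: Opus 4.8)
The plan is to treat all four integral identities (i)--(iv) by iterated integration by parts, with the number of steps matching the order of the inner and outer differentiations in the respective operator (2.5)--(2.8): one step for (i), $\be+2$ for (ii), $\al+2$ for (iii), and $\al+\be+3$ for (iv). The preparatory move in every case is to absorb the rational prefactor of the operator into the Jacobi weight. Since $\al,\be\in\mathbb{N}_0$, I may write $(x-1)^{\al}=(-1)^{\al}(1-x)^{\al}$ and $(x+1)^{\al}=(1+x)^{\al}$, so that in (i) the factor $(x-1)^{-\al}(x+1)^{-\be}$ cancels against the weight $(1-x)^{\al}(1+x)^{\be}$ up to sign, leaving $-h_{\al,\be}^{-1}\int_{-1}^{1}D_x[(1-x)^{\al+1}(1+x)^{\be+1}f']\,g\,dx$; a single integration by parts produces $U^{\al,\be}(f,g)$, and the boundary term vanishes because $(1-x)^{\al+1}(1+x)^{\be+1}$ is zero at both endpoints. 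The same cancellation turns (ii) into $(-1)^{\be}h_{\al,\be}^{-1}\int_{-1}^{1}D_x^{\be+2}\{(1-x)^{\al+\be+2}D_x^{\be+2}[(1+x)^{\be+1}f]\}\,(1+x)^{\be+1}g\,dx$, and (iii) into the mirror expression.

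For (ii) and (iii) I would then integrate by parts $\be+2$ (resp.\ $\al+2$) times to transfer the outer $D_x^{\be+2}$ onto $(1+x)^{\be+1}g$, which reproduces the symmetric form $\widetilde V^{\be,\al}(f,g)$ (resp.\ $V^{\al,\be}(f,g)$) as the interior term. The crux is the boundary sum $\sum_{k=0}^{\be+1}(-1)^k[D_x^{\be+1-k}\Phi\cdot D_x^k\Psi]_{-1}^{1}$ with $\Phi=(1-x)^{\al+\be+2}D_x^{\be+2}[(1+x)^{\be+1}f]$ and $\Psi=(1+x)^{\be+1}g$. The inner weight $(1-x)^{\al+\be+2}$ still carries a zero of order $\al+1\ge1$ after at most $\be+1$ differentiations, so every contribution at $x=1$ vanishes; at $x=-1$ only the single index $k=\be+1$ survives, because $\Psi$ vanishes there to order $\be+1$. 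Evaluating this lone term by Leibniz, using $D_x^{\be+2}[(1+x)^{\be+1}f]|_{x=-1}=(\be+2)!\,f'(-1)$ and inserting $h_{\al,\be}=2^{\al+\be+1}\al!\,\be!/(\al+\be+1)!$ together with $b_{\be,\al}=(\be+2)!(\al+1)_{\be+1}$, yields exactly the boundary term $2(\be+1)b_{\be,\al}f'(-1)g(-1)$ of (ii); case (iii) follows by the reflection $x\mapsto-x$, which interchanges $\al\leftrightarrow\be$ and the two endpoints.

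The main obstacle is (iv). After the weight absorption, $(L_{2\al+2\be+6,x}^{\al,\be}f,g)_{w(\al,\be)}$ becomes $-h_{\al,\be}^{-1}\int_{-1}^{1}D_x^{\al+\be+3}[\Xi]\,\Theta\,dx$ with $\Xi=(-1)^{\al+\be}(1-x)^{\be+1}(1+x)^{\al+1}H$, $H=D_x^{\al+\be+3}[(1-x)^{\al+1}(1+x)^{\be+1}f]$, and $\Theta=(1-x)^{\al+1}(1+x)^{\be+1}g$; performing $\al+\be+3$ integrations by parts turns the interior into $W^{\al,\be}(f,g)$. In the ensuing boundary sum the vanishing orders of $\Xi$ and $\Theta$ select a single surviving index at each endpoint, namely $k=\al+1$ at $x=1$ and $k=\be+1$ at $x=-1$. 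To evaluate the two surviving products I then need the endpoint values of $H$, obtained by a second Leibniz expansion: at $x=1$ only the summand placing $\al+1$ derivatives on $(1-x)^{\al+1}$ survives, so $H(1)$ is a constant multiple of $D_x^{\be+2}[(x+1)^{\be+1}f]|_{x=1}$, and symmetrically $H(-1)$ is a multiple of $D_x^{\al+2}[(x-1)^{\al+1}f]|_{x=-1}$. Tracking the binomial, factorial and sign factors and simplifying them against $c_{\al,\be}$, $b_{\al,\be}$, $b_{\be,\al}$ and $h_{\al,\be}$ then delivers precisely the two boundary terms of (iv). I expect this constant-matching to be the most laborious---though purely mechanical---step of the whole proposition.

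Finally, the endpoint evaluations (v) follow by direct substitution. For $L_{2,x}^{\al,\be}$ I carry out the single outer derivative explicitly to get $(\al+1)(x+1)f'+(\be+1)(x-1)f'+(x^2-1)f''$, whose values at $x=\pm1$ are immediate. For $\widetilde L_{2\be+4,x}^{\be,\al}$ and $L_{2\al+4,x}^{\al,\be}$ the outer prefactor $(x+1)$ (resp.\ $(x-1)$) forces a zero at one endpoint, while at the other the prefactor is singular, so I extract the lowest power $(x-1)^{\al}$ (resp.\ $(x+1)^{\be}$) from the Leibniz expansion of $D_x^{\be+2}[(x-1)^{\al+\be+2}(\cdots)]$ and pass to the limit, producing the stated multiples of $D_x^{\be+2}[(x+1)^{\be+1}f]|_{x=1}$ and $D_x^{\al+2}[(x-1)^{\al+1}f]|_{x=-1}$. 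For $L_{2\al+2\be+6,x}^{\al,\be}$ the outer factor $(x^2-1)$ multiplies a function bounded at $\pm1$, so both endpoint values vanish. These are exactly the ingredients that the symmetry proof of Theorem 3.1 will combine with (i)--(iv) and the point-mass contributions $Mf(-1)g(-1)+Nf(1)g(1)$ to cancel the surviving boundary terms.
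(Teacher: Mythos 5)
Your proposal is correct and follows essentially the same route as the paper's proof: absorb the rational prefactors into the Jacobi weight, integrate by parts once, $\beta+2$, $\alpha+2$, and $\alpha+\beta+3$ times respectively, use the vanishing orders of the two factors to isolate the single surviving boundary index at each endpoint ($j=\beta+1$ at $x=-1$, $j=\alpha+1$ at $x=1$), evaluate it by Leibniz, and match constants against $h_{\alpha,\beta}$, $b_{\alpha,\beta}$, $b_{\beta,\alpha}$, $c_{\alpha,\beta}$; part (v) is likewise obtained by direct evaluation from (2.5)--(2.8). The only cosmetic difference is that you derive (iii) from (ii) by the reflection $x\mapsto -x$, whereas the paper simply repeats the mirror computation.
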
    
\begin{proof}
(i) Using integration by parts, the integrated term vanishes at $x=\pm 1$, so that 
 \begin{equation*}
  \begin{aligned}     
   \big(L_{2,x}^{\al,\be}f,g\big)_{w(\al,\be)}=&(-1)^{\al}
   h_{\al,\be}^{-1}\int_{-1}^{1}D_x \big\lbrack (x-1)^{\al+1}(x+1)^{\be+1}
   D_x f(x)\big\rbrack g(x)dx\\
   =& -h_{\al,\be}^{-1}(1-x)^{\al+1}(1+x)^{\be+1}f'(x)g'(x)\big\vert_{x=-1}^{x=1}+
   U^{\al,\be}(f,g)\\
   =& U^{\al,\be}(f,g).
   \end{aligned}	 
 	 \end{equation*}
(ii) In view of the representation (2.6) of $\widetilde{L}_{2\be+4,x}^{\be,\al}$, a  $(\be+2)$-fold integration by parts yields
\begin{equation*}
  \begin{aligned}     
   &h_{\al,\be}\big(\widetilde{L}_{2\be+4,x}^{\be,\al}f,g\big)_{w(\al,\be)}\\
   &=(-1)^{\al}\int_{-1}^{1}D_x^{\be+2}\big\lbrace (x-1)^{\al+\be+2}D_x^{\be+2}
   \big\lbrack (x+1)^{\be+1} f(x)\big\rbrack\big\rbrace (x+1)^{\be+1}g(x)dx\\
   &=\sum_{j=0}^{\be+1}(-1)^{\al+j}D_x^{\be+1-j}\big\lbrace (x-1)^{\al+\be+2}
   D_x^{\be+2}\big\lbrack (x+1)^{\be+1} f(x)\big\rbrack\big\rbrace D_x^{j} 
   \big\lbrack (x+1)^{\be+1}g(x)\big\rbrack \big\vert_{x=-1}^{x=1}\\
   &\quad +\int_{-1}^{1}(1-x)^{\al+\be+2} D_x^{\be+2}\big\lbrack (x+1)^{\be+1} f(x)\big\rbrack D_x^{\be+2}\big\lbrack (x+1)^{\be+1} g(x)\big\rbrack dx.    
   \end{aligned}	 
 	 \end{equation*}
Since all terms of the sum vanish up to the one for $j=\be+1$, evaluated at $x=-1$, we have 
\begin{equation*}
  \begin{aligned}     
 &\big(\widetilde{L}_{2\be+4,x}^{\be,\al}f,g\big)_{w(\al,\be)}-
 \widetilde{V}^{\be,\al}(f,g)\\
 &=h_{\al,\be}^{-1}(1-x)^{\al+\be+2} D_x^{\be+2}\big\lbrack (x+1)^{\be+1} f(x)\big\rbrack D_x^{\be+1}\big\lbrack (x+1)^{\be+1} g(x)\big\rbrack \big\vert_{x=-1}\\
 &=h_{\al,\be}^{-1}\,2^{\al+\be+2}(\be +2)!\,f'(-1)(\be+1)!\,g(-1)\\
 &=2(\be+1)b_{\be,\al}f'(-1)g(-1).   
  \end{aligned}	 
    \end{equation*}
(iii) This identity is verified analogously to item (ii), see \cite[Prop. 3.2 (i)]{Ma2}.\\
(iv) Invoking the representation (2.8) and using the abbreviation 
$v_{\al,\be}(x)=(x-1)^{\al+1}(x+1)^{\be+1}$, we obtain, now by an $(\al+\be+3)$-fold integration by parts, 
\begin{equation*}
  \begin{aligned}     
   &h_{\al,\be}\big(L_{2\al +2\be+6,x}^{\al,\be}f,g\big)_{w(\al,\be)}\\
   &=(-1)^{\al}\int_{-1}^{1} D_x^{\al+\be+3}\big\lbrace v_{\be,\al}(x)D_x^{\al+\be+3}\big\lbrack v_{\al,\be}(x)f(x)\big\rbrack
   \big\rbrace v_{\al,\be}(x)g(x)dx\\
   &=\sum_{j=0}^{\al+\be+2}(-1)^{\al +j}D_x^{\al+\be+2-j}\big\lbrace v_{\be,\al}(x)
   D_x^{\al+\be+3}\big\lbrack v_{\al,\be}(x)f(x)\big\rbrack\big\rbrace D_x^{j} 
   \big\lbrack (v_{\al,\be}(x)g(x)\big\rbrack \big\vert_{x=-1}^{x=1}\\
   &\quad +(-1)^{\be+1}\int_{-1}^{1}v_{\be,\al}(x) D_x^{\al+\be+3}\big\lbrack v_{\al,\be}(x)f(x)\big\rbrack D_x^{\al+\be+3}\big\lbrack v_{\al,\be}(x)g(x)\big\rbrack dx.    
   \end{aligned}	 
 	 \end{equation*}
Here, the only non-vanishing contributions of the sum are the term for $j=\be +1$ at $x=-1$ and the term for $j=\al+1$ at $x=1$. Hence,
\begin{equation*}
  \begin{aligned}     
   &\big(L_{2\al+2\be+6,x}^{\al,\be}f,g\big)_{w(\al,\be)}-W^{\al ,\be}(f,g)\\
   &=h_{\al,\be}^{-1}(-1)^{\al+\be}D_x^{\al +1}\big\lbrace v_{\be,\al}(x)
   D_x^{\al+\be+3}\big\lbrack v_{\al,\be}(x)f(x)\big\rbrack\big\rbrace 
   D_x^{\be+1}\big\lbrack v_{\al,\be}(x)g(x)\big\rbrack \big\vert_{x=-1}\\
   &\quad -h_{\al,\be}^{-1}D_x^{\be+1}\big\lbrace v_{\be,\al}(x)
   D_x^{\al+\be+3}\big\lbrack v_{\al,\be}(x)f(x)\big\rbrack\big\rbrace 
   D_x^{\al+1}\big\lbrack v_{\al,\be}(x)g(x)\big\rbrack\big\vert_{x=1}\\
   &=h_{\al,\be}^{-1}(\al+1)!\,2^{\be+1}\frac{(\al+\be+3)!}{(\al+2)!}
   D_x^{\al+2}\big\lbrack (x-1)^{\al+1}f(x)\big\rbrack \big\vert_{x=-1}
   2^{\al+1}(\be+1)!\,g(-1)\\
   &\quad -h_{\al,\be}^{-1}(\be+1)!\,2^{\al+1}\frac{(\al+\be+3)!}{(\be+2)!}
   D_x^{\be+2}\big\lbrack (x+1)^{\be+1}f(x)\big\rbrack \big\vert_{x=1}
   (\al+1)!\,2^{\be+1}g(1).
       \end{aligned}	 
 	 \end{equation*}
This yields the assertion (iv) since 
\begin{equation*}
  \begin{aligned}     
  h_{\al,\be}^{-1}\,2^{\al+\be+2}\,(\al+1)!\,(\be +1)!\,(\al+\be+3)!
  =& c_{\al,\be}\,b_{\al,\be}^{-1}\,2(\be+1)_{\al+2}\,(\al+2)!\\
  =& c_{\al,\be}\,b_{\be,\al}^{-1}\,2(\al+1)_{\be+2}\,(\be+2)!
  \end{aligned}	 
 \end{equation*}
(v) The values of the four differential expressions at $x=\pm 1$ follow directly by definition (2.5)\textendash (2.8). 	 
\end{proof}
\noindent {\itshape Proof of Theorem 3.1.} The left-hand side of (3.4) takes the form
 \begin{equation*}
   \begin{aligned}     
 &\big(L_{2,x}^{\al,\be}f+\frac{M}{b_{\be,\al}}\widetilde{L}_{2\be+4,x}^{\be,\al}f+\frac{N}{b_{\al,\be}}L_{2\al+4,x}^{\al,\be}f +
 \frac{MN}{c_{\al,\be}}L_{2\al+2\be+6,x}^{\al,\be}f,g\big)_{w(\al,\be)}\\
 &+M\,\big\lbrack L_{2\al+2\be+6,x}^{\al,\be,M,N}f(x)\big\rbrack \big\vert_{x=-1}\,g(-1)+N\,\big\lbrack L_{2\al+2\be+6,x}^{\al,\be,M,N}f(x)\big\rbrack \big\vert_{x=1}\,g(1).  
   \end{aligned}	 
  \end{equation*}
In view of Proposition 3.2 this gives 
 \begin{equation*}
   \begin{aligned}     
 &U^{\al,\be}(f,g)+\frac{M}{b_{\be,\al}}\widetilde{V}^{\al,\be}(f,g)+
    \frac{N}{b_{\al,\be}}V^{\al,\be}(f,g)+\frac{MN}{c_{\al,\be}}W^{\al,\be}(f,g)\\
 &+\frac{M}{b_{\be,\al}}\,2(\be+1)\,b_{\be,\al}\,f'(-1)g(-1)-
   \frac{N}{b_{\al,\be}}\,2(\al+1)\,b_{\al,\be}\,f'(1)g(1)+
      \end{aligned}	 
    \end{equation*}
 \begin{equation*}
    \begin{aligned}    
  &+\frac{MN}{c_{\al,\be}}\bigg\lbrace \frac{c_{\al,\be}}{b_{\al,\be}}\,2(\be+1)_{\al+2} D_x^{\al+2}\big\lbrack
  (x-1)^{\al+1}f(x)\big\rbrack \big\vert_{x=-1}\;g(-1)\\
  &\hspace{1,5cm}-\frac{c_{\al,\be}}{b_{\be,\al}}\,2(\al+1)_{\be+2}D_x^{\be+2}
  \big\lbrack(x+1)^{\be+1}f(x)\big\rbrack \big\vert_{x=1}\;g(1)\bigg\rbrace\\
  &+M\big\lbrace -2(\be+1)f'(-1)-\frac{N}{b_{\al,\be}}\,2(\be+1)_{\al+2}
  D_x^{\al+2}\big\lbrack (x-1)^{\al+1}f(x)\big\rbrack \big\vert_{x=-1}\big\rbrace g(-1)\\
  &+N\big\lbrace 2(\al+1)f'(1)+\frac{M}{b_{\be,\al}}\,2(\al+1)_{\be+2}
  D_x^{\be+2}\big\lbrack(x+1)^{\be+1}f(x)\big\rbrack\big\vert_{x=1}\big\rbrace g(1). 
  \end{aligned}	 
  \end{equation*}
Since all integrated terms compensate, we are left with the sum of the four expressions
\begin{equation*}
 U^{\al,\be}(f,g)+\frac{M}{b_{\be,\al}}\widetilde{V}^{\al,\be}(f,g)+
 \frac{N}{b_{\al,\be}}V^{\al,\be}(f,g)+\frac{MN}{c_{\al,\be}}W^{\al,\be}(f,g).  \end{equation*}
Here we can interchange the roles of the functions $f$ and $g$ without altering the value of the four terms. So we arrive at the required identity (3.4). \hfill
$\Box$

\begin{corollary}
\label{cor3.3}
For $\al,\be\in\mathbb{N}_0,\;M,N>0$, the polynomial eigenfunctions of equation (2.4), $y_n(x)=P_n^{\al,\be,M,N}(x)$, $n \in \mathbb{N}_0$, satisfy the orthogonality relation, for $n \ne m$,
\begin{equation}
 \begin{aligned}     
 (y_n,y_m)_{w(\al,\be,M,N)}=&h_{\al,\be}^{-1}\int_{-1}^{1} y_n(x)y_m(x)(1-x)^{\al}
 (1+x)^{\be}dx\\
 &+M\,y_n(-1)y_m(-1)+N\,y_n(1)y_m(1)=0.
 \end{aligned}  
     \label{eq3.5} 
  \end{equation}
\end{corollary}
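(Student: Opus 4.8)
The plan is to deduce the orthogonality (3.5) from the self-adjointness of Theorem 3.1 together with the eigenvalue equation of Theorem 2.1, in the classical fashion for symmetric operators. First I would regroup equation (2.4): collecting the four differential operators into the single combined operator $L_{2\al+2\be+6,x}^{\al,\be,M,N}$ of (3.3) and the four eigenvalue components into the combined parameter of (3.1), the generalized Jacobi polynomials satisfy the plain eigenvalue equation
\[
L_{2\al+2\be+6,x}^{\al,\be,M,N}\,y_n(x)=\La_{2\al+2\be+6,n}^{\al,\be,M,N}\,y_n(x),\qquad n\in\mathbb{N}_0 .
\]
Since each $y_n=P_n^{\al,\be,M,N}$ is a polynomial, it lies in $C^{(2\al+2\be+6)}[-1,1]$ and is therefore an admissible argument for the scalar product and for the symmetry relation (3.4).

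Next I would apply the symmetry relation (3.4) to the pair $f=y_n$, $g=y_m$. Using the displayed eigenvalue equation on each side and the symmetry of the scalar product, one obtains
\[
\La_{2\al+2\be+6,n}^{\al,\be,M,N}\,(y_n,y_m)_{w(\al,\be,M,N)}
=\big(L_{2\al+2\be+6,x}^{\al,\be,M,N}y_n,y_m\big)_{w(\al,\be,M,N)}
=\big(y_n,L_{2\al+2\be+6,x}^{\al,\be,M,N}y_m\big)_{w(\al,\be,M,N)}
=\La_{2\al+2\be+6,m}^{\al,\be,M,N}\,(y_n,y_m)_{w(\al,\be,M,N)} ,
\]
whence
\[
\big(\La_{2\al+2\be+6,n}^{\al,\be,M,N}-\La_{2\al+2\be+6,m}^{\al,\be,M,N}\big)\,(y_n,y_m)_{w(\al,\be,M,N)}=0 .
\]
Thus the orthogonality for $n\ne m$ follows immediately once the combined eigenvalues are shown to be pairwise distinct.

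Establishing this distinctness is the one step that requires genuine care, and I expect it to be the main obstacle. Here the hypotheses $M,N>0$ and $\al,\be\in\mathbb{N}_0$ enter decisively: the coefficients $1$, $M/b_{\be,\al}$, $N/b_{\al,\be}$, $MN/c_{\al,\be}$ are all strictly positive because the constants (2.1) are positive, and the combined eigenvalue (3.1) is accordingly a positive linear combination of the four components in (1.9) and (2.2). The leading component $\La_{2,n}^{\al,\be}=n(n+\al+\be+1)$ is strictly increasing on $n\ge0$, while each of the three Pochhammer products $\La_{2\be+4,n}^{\be,\al}$, $\La_{2\al+4,n}^{\al,\be}$, $\La_{2\al+2\be+6,n}^{\al,\be}$ is a nonnegative, non-decreasing integer sequence in $n$ (each vanishes at the lowest one or two nonnegative integers and is a product of positive, increasing factors thereafter). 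Consequently $\La_{2\al+2\be+6,n}^{\al,\be,M,N}$ is strictly increasing in $n$, hence injective, so $\La_{2\al+2\be+6,n}^{\al,\be,M,N}\ne\La_{2\al+2\be+6,m}^{\al,\be,M,N}$ whenever $n\ne m$. Dividing the last display by this nonzero eigenvalue gap yields $(y_n,y_m)_{w(\al,\be,M,N)}=0$, which is precisely (3.5).
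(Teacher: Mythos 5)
Your proposal is correct and follows essentially the same route as the paper: apply the symmetry relation (3.4) with $f=y_n$, $g=y_m$, use the eigenvalue equation to get $\big(\La_{2\al+2\be+6,n}^{\al,\be,M,N}-\La_{2\al+2\be+6,m}^{\al,\be,M,N}\big)(y_n,y_m)_{w(\al,\be,M,N)}=0$, and conclude by the strict monotonicity of the combined eigenvalues in $n$. Your explicit justification of that monotonicity (positivity of the coefficients together with the non-decreasing Pochhammer products and the strictly increasing quadratic term) is a welcome elaboration of a step the paper merely asserts.
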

\begin{proof}
Employing the symmetry property (3.4) of the differential operator
$L_{2\al+2\be+6,x}^{\al,\be,M,N}$, we obtain
\begin{equation}
  \begin{aligned}
 &\big(\La_{2\al+2\be+6,n}^{\al,\be,M,N}-\La_{2\al+2\be+6,m}^{\al,\be,M,N}\big)     
 \,(y_n,y_m)_{w(\al,\be,M,N)}\\
 &=\big(L_{2\al+2\be+6,x}^{\al,\be,M,N}y_n,y_m\big)_{w(\al,\be,M,N)}-
 \big(y_n,L_{2\al+2\be+6,x}^{\al,\be,M,N}y_m\big)_{w(\al,\be,M,N)}=0.
 \end{aligned}  
 \label{eq3.6} 
 \end{equation}
Since the eigenvalues $\La_{2\al+2\be+6,n}^{\al,\be,M,N}$ are strictly increasing for $n \in \mathbb{N}_0$, the scalar product on the left-hand side vanishes. 
\end{proof}

\vskip0.5cm
\begin{footnotesize}
\noindent
C. Markett, Lehrstuhl A f\"ur Mathematik, RWTH Aachen, 52056 Aachen, Germany;
\sPP
E-mail: {\tt markett@matha.rwth-aachen.de}
\end{footnotesize}
\end{document}